\colorlet{tn/color/theorem}{green!60!black}
\colorlet{tn/color/claim}{gray!60!white}
\colorlet{tn/color/definition}{black}
\colorlet{tn/color/question}{orange!60!white}
\colorlet{tn/color/conjecture}{orange!60!white}
\colorlet{tn/color/defi}{red!60!black}
\theoremstyle{definition}
\newtheoremstyle{slthm}
{0pt}
{0pt}
{\normalfont}
{}
{\bfseries\footnotesize}
{.\!}
{.6em}
{\thmname{#1}\thmnumber{ #2}\thmnote{\,--#3}}
\theoremstyle{slthm}
\newlength{\mythmbar}
\newlength{\myinsep}
\newlength{\myleftinsep}
\newlength{\mylen}
\newlength{\thmskip}
\newcommand{\tndefmdstyle}[2]{%
  \mdfdefinestyle{tn/#1}{%
    linewidth=\mythmbar,%
    linecolor=tn/color/#2,%
    bottomline=false,%
    topline=false,%
    innerleftmargin=\myleftinsep,%
    leftmargin=-\myinsep,%
    innerrightmargin=\myinsep,%
    innertopmargin=1pt,%
    innerbottommargin=0pt,%
    rightmargin=\myinsep,%
    userdefinedwidth=\mylen}}
\newmdtheoremenv[style=tn/base]{theorem}{Theorem}
\newcommand{\tnmdenv}[3]{%
  \newmdtheoremenv[style=#1]{#2}[theorem]{#3}}
\newtheoremstyle{tnUnnumb}
{0pt}
{0pt}
{\normalfont}
{}
{\bfseries\footnotesize}
{.}
{.5em}
{\thmnote{#3}}
\theoremstyle{tnUnnumb}
\newmdtheoremenv[style=tn/base]{unnumtheorem}{Theorem}
\newmdtheoremenv[style=tn/base]{unnumconjecture}{Conjecture}
\newcommand{\newproofenv}[3]{%
  \newenvironment{#1}[1][#2]{%
    \renewcommand{\qedsymbol}{\hfill{#3}}\begin{proof}[#2]%
  }{%
    \end{proof}\renewcommand{\qedsymbol}{\oldqed}%
  }%
}
\newcommand{\defistyle}[1]{\textcolor{definition}{\textsl{#1}}}
\newcommand{\defi}[1]{%
  \sidepar{{\tiny#1}}%
  {\defistyle{#1}}}
\renewcommand{\defi}{\@dblarg\tn@defi} 
\renewcommand{\defistyle}[1]{%
  \textcolor{tn/color/defi}{\textsl{#1}}%
}
\def\tn@defi[#1]#2{%
  \int_gincr:N \g_tassio_note_int
  \bool_if:nTF
  {
   \int_compare_p:n
    {
     \zposy{tassionotepos\int_eval:n{\g_tassio_note_int}}
     =
     \zposy{tassionotepos\int_eval:n{\g_tassio_note_int +1}}
    }
   &&
   \int_compare_p:n
   {
    \zref@extractdefault { tassionotepage\int_eval:n{\g_tassio_note_int } }{abspage}{-1}
    =
    \zref@extractdefault { tassionotepage\int_eval:n{\g_tassio_note_int +1} }{abspage}{-1}
   }
  }
   {
    \seq_gput_right:Nn \g_tassio_note_seq {#1}
   }
   {
    \seq_gput_right:Nn \g_tassio_note_seq {#1}
    \marginnote{\fontsize{7pt}{5pt}\selectfont%
      \baselineskip=.6\baselineskip 
      \lineskip=-1.2pt 
      \seq_use:Nn \g_tassio_note_seq {,\ }}
    \seq_gclear:N \g_tassio_note_seq
   }
  \zref@label {tassionotepage\int_use:N\g_tassio_note_int}\zsaveposy {tassionotepos\int_use:N\g_tassio_note_int}
  \defistyle{#2}}
\renewcommand{\@biblabel}[1]{\textcolor{gray}{[\,}#1\textcolor{gray}{\,]}}
  \renewcommand\@openbib@code{
    \setlength\labelwidth{2cm}%
    \setlength{\itemindent}{0cm}%
    \setlength{\leftmargin}{0cm}%
    \setlength\labelsep{.8em}%
    }
\newcommand{\eps}{\varepsilon}
\newcommand{\deq}{\coloneqq}
\newcommand{\littleo}{\mathrm{o}}
\newcommand{\calb}{{\mathcal{B}}}
\newcommand{\calc}{{\mathcal{C}}}
\newcommand{\calj}{{\mathcal{J}}}
\newcommand{\bbe}{{\mathbb{E}}}
\newcommand{\bbp}{{\mathbb{P}}}
\definecolor{mygreen}{rgb}{0.01, 0.75, 0.24}
\newcommand{\afigure}[3]{%
  \begin{figure}
    \begin{center}
      \includegraphics{#1}
      \caption{#2}\label{#3}
    \end{center}
  \end{figure}
}
\newcommand{\ttk}{\ensuremath{\mathrm{TT}_k}}
\newcommand{\ttt}{\ensuremath{\mathrm{TT}_3}}
\newcommand{\mm}{\ensuremath{m_2}}
\newcommand{\arr}{\to}
\newcommand{\narr}{\not\to}
\DeclareMathOperator{\type}{type}
\newcommand{\length}{e}
\let\emptyset\varnothing
\newcommand{\Carr}{{\ensuremath{\vec C}}}
\newcommand{\Farr}{{\ensuremath{\vec F}}}
\newcommand{\Garr}{{\ensuremath{\vec G}}}
\newcommand{\Harr}{{\ensuremath{\vec H}}}
\newcommand{\Tarr}{{\ensuremath{\vec T}}}
\def\({\left(}
\def\){\right)}
\let\geq\geqslant
\let\leq\leqslant
\let\ge\geqslant
\let\le\leqslant
\newcommand{\oramsey}{\ensuremath{\vec R}\,}
\newcommand{\Aplain}[1]{\ensuremath{(A_{#1})}}
\newcommand{\Bplain}[1]{\ensuremath{(B_{#1})}}
\newcommand{\Cplain}[1]{\ensuremath{(C_{#1})}}
\newcommand{\A}[1]{\hyperref[config_A]{\Aplain{#1}}}
\newcommand{\B}[1]{\hyperref[config_B]{\Bplain{#1}}}
\newcommand{\C}[1]{\hyperref[config_B]{\Cplain{#1}}}
\newcommand{\Aellminus}{\A{\ell-1}}
\newcommand{\Nenadov}{Nenadov} 
\newcommand{\Person}{Person} 
\newcommand{\Skoric}{\v Skori\'c} 
\newcommand{\Steger}{Steger} 
\newcommand{\marginarrow}{\makebox[0em][r]{\raisebox{.1ex}{$\dashrightarrow{}$}}}
\newcommand{\case}[1]{\par\smallskip\noindent\marginarrow\textbf{Case #1. }\ignorespaces}
\newcommand{\subcase}[1]{\par\smallskip\noindent\marginarrow\textit{Case (#1). }\ignorespaces}
\newcommand{\thr}[1]{p_{#1}}
\newcommand{\sseq}{\subseteq}
\newcommand{\abs}[1]{|#1|}
\begin{document} 
\begin{center} 
  {\Large\noindent\textbf{Orientation Ramsey thresholds for cycles and
      cliques}\footnote{This study was financed in part by the
      Coordena\c c\~ao de Aperfei\c coamento de Pessoal de N\'ivel
      Superior, Brasil (CAPES), Finance Code 001.}
  }

  \smallskip%

  \noindent
  Gabriel Ferreira Barros\quad
  Bruno Pasqualotto Cavalar\footnote{FAPESP 2018/05557-7. Part of this work
    was completed while he was a master's student at IME-USP.}

  Yoshiharu Kohayakawa\footnote{CNPq
    (311412/2018-1, 423833/2018-9) and FAPESP (2018/04876-1).}\quad
  T\'assio Naia\footnote{FAPESP 2019/04375-5 and~2018/04876-1.  FAPESP
    is the S\~ao Paulo Research Foundation.  CNPq is the National
    Council for Scientific and Technological Development of Brazil.}

\end{center} 

\medskip

\begin{adjustwidth}{3\parindent}{3\parindent} 
  \noindent\small\textbf{Abstract.}
  If $G$ is a graph and $\vec H$~is an oriented graph, we write
  $G\to \vec H$ to say that every orientation of the edges of~$G$
  contains~$\vec H$ as a subdigraph. We consider the case in
  which~$G=G(n,p)$, the binomial random graph. We determine the
  threshold~$p_{\vec H}=p_{\vec H}(n)$ for the property
  $G(n,p)\to \vec H$ for the cases in which~$\vec H$ is an acyclic
  orientation of a complete graph or of a cycle.
\end{adjustwidth} 

\bigskip

\noindent\textbf{A Ramsey-type property}.
For each (undirected) graph $G$ and oriented graph \Harr, we write
$G\arr \Harr$ to mean that every orientation of~$G$ contains a copy
of~\Harr; the \defi{orientation Ramsey number}~$\oramsey(\Harr)$ is
$\inf \{\, n : K_n\arr \Harr \,\}$.
This parameter has been investigated in a number of articles~\cite[among
others]{Redei34,Stearns59:voting,ErdosMo64,Chung82:trees,wormald83,LinSaksSos83,Thomason1986,HagThom91,Lu91:claws,Lu93:Claws,Lu98:claws,havet2002:trees,Sahili2004,KMO10:sumner_exact,KMO11:sumner_approximate,MyNa2018:unavoidable,DrossHavet18},
most of which concern a~conjecture of~Sumner~\cite{wormald83}.
\emph{Sumner's universal tournament conjecture}  states that~$\oramsey(\Tarr)\le 2e(\Tarr)$
for every oriented
tree~$\Tarr$; this has been confirmed for all sufficiently large trees
by K\"uhn, Mycroft and
Osthus~\cite{KMO11:sumner_approximate,KMO10:sumner_exact}; see
also~\cite{BangJGutin01,Naia18:thesis}.

\medskip\noindent\textbf{Thresholds}. Thresholds for Ramsey-type
properties are widely studied as well (see,
e.g.,~\cite{janson00:_random_graph,NPSS17:ramsey_fram} and the many
references therein).
We call
$p_{\Harr}=p_{\Harr}(n)$ a \defi{threshold} for $G(n,p)\arr\Harr$ if
\begin{equation*}
  \bbp\bigl[\,G(n,p)\arr \Harr\,\bigr] =
\begin{cases}
  0 & \text{if $p\ll p_{\Harr}$}\\
  1 & \text{if $p\gg p_{\Harr}$},
  \end{cases}
\end{equation*}
where $a\ll b$ (or, equivalently, $b\gg a$) means
$\lim_{n\to\infty\,} a_n/b_n\to 0$.
As is customary, we speak of `the threshold~$p_{\Harr}$', since
$p_{\Harr}$ is unique within constant factors.
If~$\Harr$ is acyclic, then the property
$G(n,p)\arr \Harr$ is non-trivial and monotone, and
hence~\cite{bollobas87:_thres_funct} it has a
threshold~$p_{\Harr}=p_{\Harr}(n)$. The regularity method can be
used to give an upper bound for~$p_{\Harr}=p_{\Harr}(n)$ (it
suffices to combine ideas from \cite[Section 8.5]{janson00:_random_graph} and, say, \cite{conlon14:_klr}). For an alternative
approach giving the same upper bound, based on the methods of
\cite{nenadov_steger_2016}, see~\cite{cavalar2018ramsey}.
For any graph or digraph $G$,
the \defi[max.\ density]{maximum density} and
(when~$v(G)\ge 3$) the \defi[max.\ $2$-density]{maximum $2$-density}
of~$G$ are, respectively,
\begin{equation*}
  m(G)   \deq
  \max_{\substack{J\subseteq G\\ v(J) \ge 1}}\,\frac{e(J)}{v(J)}
  \qquad\text{and}\qquad
  \mm(G) \deq
  \max_{\substack{J\subseteq G\\ v(J) \ge 3}}\,
  \frac{e(J)-1}{v(J)-2}.
\end{equation*}

\begin{theorem}
  \label{thm:cav18_threshold}
  Let $\Harr$ be an acyclically oriented graph.
  There exists a constant $C = C(\Harr)$ such that,
  if
  $p \ge Cn^{-1/\mm(\Harr)}$,
  then~$\bbp\bigl[ G(n,p) \arr \Harr \bigr]\to1$ as $n\to\infty$.
\end{theorem}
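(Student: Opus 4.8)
The plan is to run the hypergraph container argument of~\cite{nenadov_steger_2016}, in the form adapted to orientations in~\cite{cavalar2018ramsey}. Write $H$ for the underlying graph of~$\Harr$ and put $h\deq v(\Harr)=v(H)$; the subdigraphs of~$\Harr$ correspond bijectively to the subgraphs of~$H$ with the same numbers of vertices and edges, so $\mm(\Harr)=\mm(H)$ and the hypothesis on~$p$ is exactly what the undirected theory asks of~$H$. We may assume $\mm(\Harr)\ge1$: otherwise $H$ is a matching and $G(n,p)\to\Harr$ holds precisely when $G(n,p)$ has a matching of the appropriate size, which has the classical threshold $n^{-2}=n^{-1/\mm(\Harr)}$. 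Let $W$ be the set of ordered pairs of~$[n]$ and, for $T\subseteq W$, put $\pi(T)\deq\set{\{u,v\}:(u,v)\in T\text{ or }(v,u)\in T}$; an orientation of a graph~$G$ on~$[n]$ is a subset~$D\subseteq W$ containing exactly one of $(u,v),(v,u)$ for each $\{u,v\}\in E(G)$ and neither otherwise, so that $\pi(D)=E(G)$. If $\cald$ denotes the $e(\Harr)$-uniform hypergraph on~$W$ whose edges are the arc-sets of copies of~$\Harr$, then a $\Harr$-free orientation of~$G$ is precisely an independent set~$D$ of~$\cald$ with $\pi(D)=E(G)$. Finally, since $\Harr$ is acyclic a topological order exhibits it inside the transitive tournament~$\mathrm{TT}_h$, and every tournament on~$2^{h-1}$ vertices contains~$\mathrm{TT}_h$, so $r\deq\oramsey(\Harr)$ is finite; moreover, by Tur\'an's theorem there is $\beta=\beta(\Harr)>0$ such that every $k$-vertex graph, $k$ large, with at least $(1-\beta)\binom k2$ edges contains~$K_r$, and hence every orientation of such a graph contains~$\Harr$.

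First I would apply the hypergraph container theorem to~$\cald$, exactly as it is applied to the hypergraph of copies of~$H$ on~$E(K_n)$ in the proof of the random Ramsey theorem --- the co-degree bookkeeping is identical and is governed by $\mm(\Harr)=\mm(H)$. For a suitably small $\delta=\delta(\Harr)>0$ this yields a family~$\calc$ of subsets of~$W$ and, for each independent set~$I$ of~$\cald$, a \emph{fingerprint} $T(I)\subseteq I$ with $\abs{T(I)}\le b\deq c_1 n^{2-1/\mm(\Harr)}$ such that $I\subseteq C(T(I))\in\calc$, and with the property that each $C\in\calc$ contains at most~$\delta n^{h}$ copies of~$\Harr$. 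The combinatorial crux is then the deterministic claim that, for some $\eps=\eps(\Harr)>0$, every $C\in\calc$ satisfies $\abs{\pi(C)}\le(1-\eps)\binom n2$. To prove it, suppose $\abs{\pi(C)}>(1-\eps)\binom n2$: for a uniformly random $k$-subset~$S$ of~$[n]$ the expected number of pairs inside~$S$ missing from~$\pi(C)$ is below~$\eps\binom k2$, so at least half the $k$-sets induce at least $(1-2\eps)\binom k2$ pairs of~$\pi(C)$; with $\eps$ small and $k$ large each such~$S$ then contains a~$K_r$ all of whose pairs lie in~$\pi(C)$, and choosing for each such pair an orientation present in~$C$ produces an $r$-vertex tournament contained in~$C$, hence --- as $r=\oramsey(\Harr)$ --- a copy of~$\Harr$ contained in~$C$. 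Since~$k$ is fixed, a routine double count turns the $\Omega(\binom nk)$ good $k$-sets into $\Omega(n^{h})$ distinct copies of~$\Harr$ inside~$C$, contradicting the container property once~$\delta$ is small; the parameters are fixed in the order $\Harr\rightsquigarrow r,\beta$, then~$\eps$, then~$k$, then~$\delta$.

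The probability bound is then a short computation, and --- unlike in multicolour Ramsey, where one must cope with a \emph{union} of $q$ containers --- a single exposure suffices, because a $\Harr$-free orientation lies in one container. Indeed, if $G(n,p)$ admits a $\Harr$-free orientation~$D$, then with $T\deq T(D)\subseteq D$ and $C\deq C(T)$ we have $\pi(T)\subseteq\pi(D)=E(G(n,p))$ and $E(G(n,p))=\pi(D)\subseteq\pi(C)$; moreover $\set{\pi(T)\subseteq E(G(n,p))}$ and $\set{E(G(n,p))\subseteq\pi(C)}$ depend on disjoint sets of pairs (as $\pi(T)\subseteq\pi(C)$), hence are independent. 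Summing over the possible fingerprints~$T$ --- of which there are at most $2^{j}\binom{\binom n2}{j}$ with $\abs{\pi(T)}=j$ --- and using $\abs{\pi(C)}\le(1-\eps)\binom n2$ gives
\[
\bbp\bigl[\,G(n,p)\narr\Harr\,\bigr]\ \le\ (1-p)^{\eps\binom n2}\sum_{j\le b}2^{j}\binom{\binom n2}{j}p^{j}\ \le\ e^{-\eps p\binom n2}(b+1)\Bigl(\tfrac{eC}{c_1}\Bigr)^{\!b}.
\]
As $b\ln(eC/c_1)=O\!\bigl(n^{2-1/\mm(\Harr)}\log C\bigr)$ while $\eps p\binom n2\ge\tfrac{\eps C}{4}\,n^{2-1/\mm(\Harr)}$, and $\mm(\Harr)\ge1$ ensures $n^{2-1/\mm(\Harr)}\to\infty$, the right-hand side tends to~$0$ once $C=C(\Harr)$ is large enough; the key point is that $\log C=\littleo(C)$, which is also why no spurious factor of~$\log n$ appears.

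The main obstacle, beyond invoking the container theorem as a black box, is verifying that it applies to~$\cald$ with fingerprints of size $O(n^{2-1/\mm(\Harr)})$; but this is the classical $H$-free co-degree computation verbatim, since counting copies of~$\Harr$ among ordered pairs mirrors counting copies of~$H$ among edges. Everything else is soft: the deterministic claim uses only Tur\'an's theorem and the fact that acyclic digraphs embed into transitive tournaments, and the final estimate is a direct calculation requiring neither a two-round exposure nor any dense Ramsey input. (Alternatively, one can use the regularity method: regularise a given orientation, locate~$\Harr$ in the reduced digraph via the robust Tur\'an fact above, and blow it up using the embedding form of the resolved Kohayakawa--{\L}uczak--R\"odl conjecture~\cite{conlon14:_klr}, along the lines indicated around~\cite[Section~8.5]{janson00:_random_graph}; there the hard input is that conjecture itself.)
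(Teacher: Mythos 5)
The paper does not include its own proof of Theorem~\ref{thm:cav18_threshold}: it points to two routes, the regularity method (combining \cite[Section 8.5]{janson00:_random_graph} with the KLR embedding result of \cite{conlon14:_klr}) and the Nenadov--Steger container method of \cite{nenadov_steger_2016} as adapted in \cite{cavalar2018ramsey}. Your plan is a correct and faithful rendering of the second of these routes (set up the hypergraph of $\Harr$-arc-sets on ordered pairs, apply containers with fingerprints of size $O(n^{2-1/\mm})$, show via a Tur\'an-plus-sampling argument that every container projects to at most $(1-\eps)\binom n2$ pairs, and close with a union bound over fingerprints in a single exposure), including the key observation that, unlike the multicolour problem, a $\Harr$-free orientation lies in a single container, so it matches what the paper cites.
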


\noindent\textbf{Contribution}.
We determine the orientation Ramsey threshold
for all acyclic orientations of the complete graph~$K_t$
and cycle~$C_t$, for each~$t\geq 3$.
We also determine the threshold for certain oriented bipartite graphs.
We call a digraph~\Harr\ \defi{anti-directed}
if each vertex in~\Harr\ has either no inneighbours or no~outneighbours
(so~\Garr\ is bipartite and all arcs point to the same part).

\begin{theorem}
    \label{t:thresholds}
    If $\Harr$ is an acyclic orientation of~$K_t$ or~$C_t$, then
    \begin{align*}
      p_{\Harr} (n)  & =
      \begin{cases}
        \displaystyle n^{-1/m(K_4)}      &  \text{if $t=3$}  \\
        \displaystyle n^{-1/\mm(\Harr)}   & \text{if $t\ge 4$}
      \end{cases} \\
      \intertext{is the threshold for $G(n,p)\arr \Harr$. Moreover, if
      \Harr\ is an anti-directed orientation of a strictly
      $2$-balanced graph~$H$ such that $\delta(H)\ge 2$
      and~$\mm(H) - \lfloor\mm(H)\rfloor \leq 1/2$, then}
      p_{\Harr}
      (n) & = n^{-1/\mm(\Harr)}
    \end{align*}
is the threshold for $G(n,p)\arr \Harr$.
\end{theorem}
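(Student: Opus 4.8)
We treat the upper bound on $p_{\Harr}$ separately from the lower bound, and split the latter (the $0$-statement) according to whether $\Harr$ is an acyclic orientation of $K_t$, an acyclic orientation of $C_t$, or an anti-directed orientation of $H$. The upper bounds are essentially immediate: since $\mm(\Harr)=\mm(H)$ for $H$ the graph underlying $\Harr$, Theorem~\ref{thm:cav18_threshold} gives $\bbp[G(n,p)\arr\Harr]\to1$ as soon as $p\ge Cn^{-1/\mm(\Harr)}$, which (by monotonicity) yields the $1$-statement for the case $t\ge4$ of $K_t$ and of $C_t$ and for the anti-directed case. For $t=3$ an acyclic orientation of $K_3$ is the transitive triangle $\ttt$; here we instead use $\oramsey(\ttt)=4$ (every tournament on four vertices contains a transitive triangle), so $K_4\subseteq G$ implies $G\arr\ttt$, and since $m(K_4)=3/2$ we get $G(n,p)\arr\ttt$ a.a.s.\ once $p\gg n^{-1/m(K_4)}$.

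For the $0$-statements fix $p\ll n^{-1/\rho}$, where $\rho=m(K_4)$ if $t=3$ and $\rho=\mm(\Harr)$ otherwise, and the goal is to produce a.a.s.\ an orientation of $G(n,p)$ containing no copy of $\Harr$. I would use the same scheme in all cases. First, an \emph{obstruction list}: call $F$ a minimal obstruction if $F\arr\Harr$ but no proper subgraph of $F$ does, and prove that every minimal obstruction $F$ has $m(F)\ge\rho$ (if $t=3$) or $\mm(F)\ge\rho$ (otherwise) --- so that, for each fixed minimal obstruction, a first-moment bound gives $F\not\subseteq G(n,p)$ a.a.s. For $\Harr=\ttt$ this density fact is clean: an orientation of $G$ has no transitive triangle iff $N^{+}(v)$ is independent for every $v$, and such an orientation exists whenever $G$ is $2$-degenerate (build it greedily along a $2$-degeneracy order); hence no obstruction is $2$-degenerate, so every obstruction has a subgraph of minimum degree $\ge3$ and thus $m(F)\ge3/2$. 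Secondly, \emph{locality}: show that a.a.s.\ the union of all copies of $K_t$ (resp.\ of $H$), together with all $K_{t+1}$'s when $t\ge4$ --- a.a.s.\ there is no $K_{t+2}$, as $m(K_{t+2})=(t+1)/2=\mm(K_t)$ --- splits into connected pieces each spanning $\littleo(\log n)$ vertices (boundedly many when $p\le n^{-1/\rho-\delta}$), because a connected union of many copies of $K_t$ (resp.\ of $H$) has density tending to $\rho$ from below, hence does not occur just below $n^{-1/\rho}$. Thirdly, \emph{piecewise orientation}: orient each piece --- which is free of minimal obstructions by the first step --- so as to contain no copy of $\Harr$, and orient all other edges arbitrarily; as no copy of $\Harr$ straddles two pieces, this is an $\Harr$-free orientation of $G(n,p)$.

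The content, and the step I expect to be the main obstacle, is the quantitative form of the locality step together with the piecewise orientation. For $\Harr=\mathrm{TT}_t$ with $t\ge4$ one first replaces ``no copy of $\mathrm{TT}_t$'' by the equivalent ``every $K_t$ contains a directed triangle'' (a tournament is transitive iff it has no directed $3$-cycle); a piece is then oriented by orienting each of its maximal cliques --- of order at most $t+1<2^{t-1}=\oramsey(\mathrm{TT}_t)$ --- as a sub-tournament of one fixed tournament on $2^{t-1}-1$ vertices that contains no $\mathrm{TT}_t$, and reconciling these local orientations across overlapping cliques into a single global one is exactly where the bound on the piece size is used. For $t=3$ a small obstruction-free piece has triangle structure that is a tree-like union of ``books'' of at most four triangles, orientable cyclically edge by edge. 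The $C_t$ and anti-directed cases follow the same outline; in the anti-directed case the basic observation is that a \emph{forest} of copies of $H$ is oriented $\Harr$-freely by rooting each tree and orienting away from its root (then no vertex has two in-arcs, so, since $\delta(H)\ge2$, no copy of the anti-directed $\Harr$ occurs), and the hypothesis $\mm(H)-\lfloor\mm(H)\rfloor\le1/2$ is what is used to bound the degeneracy of the auxiliary piece-graphs tightly enough for this forest-type argument to be carried out cluster by cluster. Establishing the size and degeneracy bounds in the locality step and the explicit orientations in the piecewise step is where the bulk of the work lies.
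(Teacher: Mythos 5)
Your overall scheme is close to the paper's in spirit: prove a ``1-statement'' via Theorem~\ref{thm:cav18_threshold} (and $K_4\arr\ttt$ for $t=3$), and a ``0-statement'' by showing that below the claimed threshold one can exhibit an $\Harr$-free orientation. The density fact you isolate as step one --- that $G\arr\Harr$ forces $m(G)\ge\mm(H)$ (so that every minimal obstruction has the right density) --- is also exactly what the paper proves (Theorems~\ref{t:tourn-dens-lbound}, \ref{t:ell-ge-5}, \ref{t:ell=4} and Corollary~\ref{cor:anti-directed}, applied in contrapositive). Your observation that an orientation is $\ttt$-free iff every out-neighbourhood is independent, and the degree-$2$-vertex induction for $2$-degenerate graphs, are also sound.

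However, your ``locality'' step has a genuine gap, and it is the load-bearing step. Your ``pieces'' are connected unions of copies of $H$ --- what the paper calls $H$-components. You assert that a.a.s.\ every piece spans $\littleo(\log n)$ vertices (or has density $<\rho$), because a connected union of many overlapping copies of $H$ would be dense. Neither assertion follows from what you have. The result that is actually available (Theorem~\ref{t:npss17_block}, from Nenadov--Person--\v Skori\'c--Steger) is a density bound on \emph{$H$-blocks}, not on $H$-components: an $H$-block is an $H$-closed subgraph with an irreducibility property, and in particular every edge of it must lie in at least two copies of $H$. An $H$-component is in general not $H$-closed and is not a union of $H$-blocks: it may contain many edges lying in a unique copy of $H$, stitched together into a long, possibly dense cluster. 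So for $p$ just below $n^{-1/\rho}$ the $H$-components can be large and there is no a priori bound showing they are obstruction-free. The paper closes this gap with Lemma~\ref{l:component-to-blocks}: one iteratively deletes copies of $H$ that have at most two $H$-closed edges until an $H$-closed graph remains, applies the block decomposition (Lemma~\ref{l:decomp-in-blocks}) to that, and then re-inserts the deleted copies one at a time, extending the orientation at each re-insertion. This re-insertion is where the notion of \defistyle{$2$-Ramsey-avoidable} (Remark~\ref{r:2-ramsey-avoidable}) is indispensable: because at most two edges of the re-inserted copy are already oriented, and $\Harr$ is $2$-Ramsey-avoidable, the copy can always be completed $\Harr$-freely. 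Your proposal has no analogue of this reduction, so the ``piecewise orientation'' step is not justified.

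Two smaller issues. First, your clique orientation for $t\ge4$ --- orient each maximal $K_t$/$K_{t+1}$ as a sub-tournament of a fixed $\ttk$-free tournament and ``reconcile across overlapping cliques'' --- does not obviously terminate in a consistent global orientation; two cliques sharing an edge can demand opposite directions, and you give no procedure for resolving these conflicts. The paper instead (Theorem~\ref{t:tourn-dens-lbound}) peels off a vertex $u$ of degree at most $k$ (Lemma~\ref{l:d-degenerate}), orients the rest by induction, and shows that $u$ lies in at most two copies of $K_k$ whose structure is constrained enough to plant a directed triangle in each --- a genuinely different and self-contained argument. Second, in the anti-directed case your ``forest of copies'' picture does not reflect the actual structure of an $H$-block; the paper's Theorem~\ref{t:anti-directed} again removes a low-degree vertex $v$ and orients $\lfloor\deg(v)/2\rfloor$ of its edges in and the rest out, using $\lceil\deg(v)/2\rceil\le\delta(H)-1$ (which is where the hypothesis $\mm(H)-\lfloor\mm(H)\rfloor\le1/2$ enters, via Corollary~\ref{cor:anti-directed}). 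In short: you correctly identified what has to be shown about low-density graphs, but you are missing the $H$-block machinery (and the $2$-Ramsey-avoidable device) that links the probabilistic input to those deterministic lemmas, and the explicit orientation constructions still need to be carried out.
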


In view of Theorem~\ref{thm:cav18_threshold}, to prove
Theorem~\ref{t:thresholds} (except for the case in which~$\vec H$ is
an orientation of~$K_3$), it suffices to prove the so called
$0$-statement, that is, it is enough to show that
if~$p\ll n^{-1/m_2(\vec H)}$, then $G(n,p)\arr{\vec H}$ holds with
vanishing probability.  Our proof of this $0$-statement uses recent
advances in the study of Ramsey-type thresholds: a framework developed
by \Nenadov, \Person, \Skoric\ and~\Steger~\cite{NPSS17:ramsey_fram}
(outlined below) and structural results of~Barros, Cavalar, Mota
and~Parczyk~\cite{BPCOMP20:antiramsey}.

We need only a simplified version of the results in~\cite{NPSS17:ramsey_fram}
(see Definitions 10 and~11 in\cite{NPSS17:ramsey_fram}).
Let $G$ and~$H$ be graphs, where~$\delta(H)>1$.
An edge $e\in E(G)$ is \defi[$H$-closed edge]{$H$-closed}
if $e$~belongs to at least two copies of~$H$ in~$G$.
A copy of~$H$ in~$G$ is \defi[$H$-closed copy]{$H$-closed}
if at least three of its edges are $H$-closed, and
$G$ is \defi[$H$-closed graph]{$H$-closed} if
all vertices and edges of~$G$ lie in copies of~$H$
and every copy of $H$ in~$G$ is $H$-closed.
Finally, $G$ is an \defi{$H$-block} if $G$ is $H$-closed and
for each proper non-empty subset $E'\subsetneq E(G)$
there exists a copy $H'$ of~$H$ in~$G$
such that $E(H')\cap E' \neq \emptyset$
and~$E(H')\setminus E'\neq \emptyset$.

\medskip
\begin{theorem}[{\cite[Corollary 13]{NPSS17:ramsey_fram}}]
  \label{t:npss17_block}
    \label{cor:npss17_block}
    Let $H$ be a strictly $2$-balanced graph with at least $3$ edges
    such that $H$ is not a matching.
    If $p \ll n^{-1/\mm(H)}$, then
    with high probability every
    $H$-block~$F$ of $G(n,p)$ satisfies~$m(F) < \mm(H)$.
\end{theorem}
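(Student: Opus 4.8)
The plan is to prove the statement in contrapositive form by a first-moment computation. Set $d\deq\mm(H)$ and write $p=\omega^{-1}n^{-1/d}$ where $\omega=\omega(n)\to\infty$ (this is what $p\ll n^{-1/d}$ means). It then suffices to show that the expected number of subgraphs of $G(n,p)$ that are $H$-blocks~$F$ with $m(F)\ge d$ is $\littleo(1)$, since Markov's inequality then gives that with high probability no such~$F$ occurs.

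The first ingredient is a structural description of $H$-blocks. I would show that every $H$-block~$F$ admits a \emph{construction sequence}: copies $H_1,\dots,H_t$ of~$H$ with $F=H_1\cup\cdots\cup H_t$ such that, writing $F_i\deq H_1\cup\cdots\cup H_i$, every $H_i$ with $i\ge2$ has at least one edge in $F_{i-1}$ and at least one edge outside $F_{i-1}$. This follows by greedily growing an edge-overlapping chain of copies of~$H$ that keeps contributing new edges; if the resulting union $F'$ were a proper subgraph of~$F$, the $H$-block property applied to $E'=E(F')$ would supply a further copy straddling~$F'$, which extends the chain, contradicting maximality.

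The second ingredient is density bookkeeping along such a sequence, where strict $2$-balancedness enters. For a graph~$G$ put $\phi(G)\deq e(G)-d\,v(G)$. Strict $2$-balancedness gives $\phi(H)=1-2d$, while $\phi(J)<\phi(H)$ for every proper subgraph $J\subsetneq H$ with $v(J)\ge3$ and $\phi(J)=\phi(H)$ when $J$ is a single edge. A direct computation gives $\phi(F_i)-\phi(F_{i-1})=\phi(H)-\phi(H_i\cap F_{i-1})$; since $H_i\cap F_{i-1}$ contains an edge, each increment is nonnegative, so $\phi$ is nondecreasing along the sequence and every $H$-block satisfies $\phi(F)\ge\phi(H)$. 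A closer look — using the $H$-closed condition to exclude the degenerate construction patterns along which $\phi$ would fail to grow (triangle strips, in the case $H=K_3$) — should give that a dense $H$-block, after passing to a densest sub-block, satisfies $e(F)\ge d\,v(F)$ (so that its densest subgraph is~$F$ itself), and moreover has excess $\phi(F)$ bounded below by a positive multiple of~$t$.

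Finally I would run the first moment. The number of labelled $H$-blocks in $K_n$ with a $t$-step construction sequence is at most $n^{v(F)}\cdot2^{O_H(t\log t)}$: choose the first copy in at most $n^{v(H)}$ ways, and at each later step choose the shared sub-copy inside $F_{i-1}$ in at most $|F_{i-1}|^{v(H)}\le(t\,v(H))^{v(H)}$ ways and the new vertices in at most $n^{v(H)}$ ways. Since $v(F)=O_H(t)$ and $e(F)\ge e(H)+t-1\ge t$, the expected number of dense $H$-blocks with a $t$-step construction appearing in $G(n,p)$ is at most $2^{O_H(t\log t)}\,n^{-\phi(F)/d}\,\omega^{-e(F)}\le2^{O_H(t\log t)}\,\omega^{-t}$. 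I expect the main obstacle to be making the sum over~$t$ converge: the union ranges over infinitely many and unboundedly large $H$-blocks — for instance, for $H=K_3$ every clique $K_s$ with $s\ge4$ is a $K_3$-block — so a crude count fails. The resolution is the lower bound $\phi(F)=\Omega(t)$ from the previous step: it contributes a factor $n^{-\Omega(t)}$ which dominates the entropy term $2^{O_H(t\log t)}$ once $t\gtrsim\log n/\log\log n$, while for the $O(\log n/\log\log n)$ remaining values of~$t$ the factor $\omega^{-t}$ already makes each summand $\littleo(1)$; hence the total is $\littleo(1)$, completing the argument.
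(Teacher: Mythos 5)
This statement is imported: the paper cites it as~\cite[Corollary~13]{NPSS17:ramsey_fram} and supplies no proof of its own, so there is nothing in the paper to compare your argument against. Taken on its own, your sketch chooses the right framework: a first moment over labelled $H$-blocks, organised by construction sequences, with the excess $\phi(F)=e(F)-\mm(H)\,v(F)$ tracked step by step. The bookkeeping you do is correct --- $\phi(H)=1-2\mm(H)$, the increment $\phi(F_i)-\phi(F_{i-1})=\phi(H)-\phi(H_i\cap F_{i-1})$ is nonnegative because $H_i\cap F_{i-1}$ is a proper subgraph with an edge, and strict $2$-balancedness gives strict inequality whenever the overlap has at least $3$ vertices --- and the existence of a construction sequence for an $H$-block is argued correctly from the block property.

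The gap is in the crucial quantitative claim that dense $H$-blocks satisfy $\phi(F)=\Omega(t)$. As stated this is simply false: for $H=K_3$, the octahedron and~$K_5$ are $K_3$-blocks with $\phi=0$ and any construction sequence has $t\geq 5$. What you actually need is a bound of the form $\phi(F)\geq ct-C$ for explicit constants, and this requires a genuine argument showing that the $H$-closed condition prevents long runs of ``degenerate'' steps (overlap a single edge), since precisely those steps leave $\phi$ unchanged. You gesture at this (``the $H$-closed condition to exclude the degenerate construction patterns\ldots should give'') but offer no proof; accounting for how $H$-closedness forces each dangling edge created by a degenerate step to be covered later, while those covering steps create new dangling edges, is exactly where the content lies. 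Separately, ``passing to a densest sub-block'' is not a legitimate move: the densest subgraph of an $H$-block need not be $H$-closed (let alone an $H$-block), so the construction-sequence machinery does not apply to it, and the reduction from ``$m(F)\geq\mm(H)$'' to ``$\phi(F)\geq 0$'' is unjustified. Without these two steps the sum over~$t$ is not shown to converge when $\omega(n)\to\infty$ slowly, and the first-moment argument does not close.
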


Since complete graphs and cycles are strictly $2$-balanced,
Theorem~\ref{cor:npss17_block} reduces the proof of the $0$-statement
of the case $t\ge 4$ in Theorem~\ref{t:thresholds}
to showing that $G\narr \Harr$ for every graph~$G$
whose $H$-blocks have maximum density strictly below~$\mm(H)$.
This is achieved for cycles using results from~\cite{BPCOMP20:antiramsey},
whereas for tournaments and anti-directed graphs, as well as
for the case $t=3$ of Theorem~\ref{t:thresholds} we
use ad~hoc methods (see Theorems~\ref{thm:bound_ttt},
\ref{t:tourn-dens-lbound}~and~\ref{t:anti-directed}).
Theorem~\ref{t:thresholds} is proved in~Section~\ref{s:main-proofs}.

\bigskip\noindent \textbf{Remark.} Other Ramsey-type
properties for directed graphs include
requiring copies to be induced~\cite{cochand1989few,kohayakawa1996ramsey,brightwell1993ramsey} and
allowing colourings plus orientations~\cite{BucHebLetzSud2020,Buc2019}.

\section{Auxiliary definitions and results}
\label{s:tools}
We follow standard notation
(see, e.g.,~\cite{janson00:_random_graph,diestel00:_graph}).
A $k$-path is a path with $k$~vertices; $k$-cycles are defined similarly.
A~\defi[directed path or~cycle]{directed} $k$-path is an oriented path~$v_1\arr \cdots\arr v_k$.
A~\defistyle{directed} $k$-cycle is oriented as $v_1\arr\cdots\arr v_k\arr v_1$.
Let~\Garr\ be an oriented graph.
A maximal directed path in~\Garr\ is called a~\defi{block}.
A path or block is \defi{long} if it has at least~$3$~edges.
The following exercise is left to the reader.

\begin{lemma}\label{l:d-degenerate}
  If $G$ is a graph, then $\delta(J) \le 2m(G)$
  for each $J \subseteq G$ (i.e., $G$ is $2m(G)$-degenerate).
\end{lemma}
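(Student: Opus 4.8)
The plan is to combine the elementary edge-counting identity (the handshake lemma) with the definition of the maximum density $m(G)$. First I would fix an arbitrary subgraph $J \subseteq G$ with $v(J) \ge 1$; if $J$ has no edges the claim is immediate, so assume $e(J) \ge 1$. Every vertex of $J$ has degree at least $\delta(J)$ in $J$, so summing degrees over $V(J)$ gives $2e(J) = \sum_{v \in V(J)} \deg_J(v) \ge v(J)\,\delta(J)$, whence $\delta(J) \le 2e(J)/v(J)$. By the definition of $m(G)$, taking $J$ itself as the competitor in the maximum yields $e(J)/v(J) \le m(G)$, and therefore $\delta(J) \le 2m(G)$, which is exactly the asserted inequality.

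For the parenthetical reformulation, recall that a graph is $d$-degenerate if each of its non-empty subgraphs contains a vertex of degree at most $d$ in that subgraph. Applying the bound just obtained to an arbitrary non-empty $J \subseteq G$, the vertex of $J$ realising $\delta(J)$ has degree at most $2m(G)$, so $G$ is $2m(G)$-degenerate. There is no genuine obstacle in this argument; the only point requiring a line of care is the degenerate range of small $v(J)$ — a single vertex gives $\delta(J) = 0$, and a single edge gives $\delta(J) = 1 \le 2m(G)$ since then $m(G) \ge 1/2$ — but all such cases are covered by the displayed computation above.
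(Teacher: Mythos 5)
Your proof is correct and is the standard argument (handshake lemma plus the definition of $m(G)$); the paper itself leaves this as an exercise, so there is no textual proof to compare against, but your argument is exactly what is intended.
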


Let $G$ and~$H$ be graphs, and let~$\Harr$ be an orientation of~$H$.
We denote by~$\calc_{H}(G)$ the \defi[]{edge intersection graph} of~$H$ in~$G$, whose
vertices correspond to copies of~$H$ in~$G$ and
 whose edges join distinct copies which share a common~edge in~$G$.
An~\defi{$H$-component} is a subgraph of~$G$
formed by the union of all copies of~$H$
in some connected component of $\calc_H(G)$.
Note that $G\narr \Harr$
if and only if each $H$-component of $G$
admits an $\Harr$-free orientation.
Let~$G$ and~$H$ be graphs and
let $C$ be an $H$-component of~$G$.
If $H_1$ is an arbitrary copy of~$H$ in~$C$, then there exists a
sequence $H_1 \subseteq H_2\subseteq \cdots\subseteq H_t = C$ with the
following property. For each $i\in[t-1]$, there exists a copy~$H'$ of~$H$
such that $H' \not \subseteq H_i$,  $E(H') \cap
E(H_i) \neq \emptyset$ and~$H_{i+1} = H_i \cup H'$.
We say that $(H_1,\ldots,H_t)$ \defi{constructs}~$C$,
and call~$(H_1,\ldots,H_t)$ a~\defi[construction seq.]{construction sequence} of~$C$.
For each~$i\in[t-1]$, we say that a vertex or edge of~$H_{i+1}$ is
\defi[new vertex or edge]{new} in~$H_{i+1}$ if it is not contained in~$H_i$, and say that
$F\subseteq H_{i+1}$ is \defi[new graph]{new} (in~$H_{i+1}$) if~$F$ contains a new edge in~$H_{i+1}$.
 Moreover, if $H_1'$ is a copy of~$H$ in~$H_i$, then there exists a
construction sequence~$(H_1',\ldots, H_j')$ of~$H_i$ starting
with~$H_1'$, and hence a construction
sequence~$(H_1',\ldots,H_j',H_{i+1},\ldots,H_t)$ of~$C$.

Let $G$ be a graph and suppose $E\subseteq E(G)$. We write \defi{$G[E]$} for the
subgraph of~$G$ consisting of the edges in~$E$ and the vertices in~$G$
which are incident with those edges.
We call $H$ \defi{strictly $2$-balanced} if
$\mm(F) < \mm(H)$ for each proper subgraph
$F \sseq H$.

\begin{lemma}[{\cite[Lemma 14]{NPSS17:ramsey_fram}}]
  \label{l:decomp-in-blocks}
  Let $G$ and~$H$ be graphs. If~$G$ is $H$-closed,
  then~$E(G)$ admits a partition $\{E_1,\ldots,E_k\}$
  such that
  $G[E_1], \ldots, G[E_k]$ are $H$-blocks
  and each copy of~$H$ in~$G$ lies entirely
  in one of these $H$-blocks.
\end{lemma}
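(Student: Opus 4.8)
The plan is to use the partition of $E(G)$ induced by the finest equivalence relation under which every copy of $H$ has all of its edges in a single class. First I would define, for $e,f\in E(G)$, the relation $e\sim f$ to mean that there is a sequence $e=e_0,e_1,\dots,e_m=f$ of edges of $G$ such that for each $j$ some copy of $H$ in $G$ contains both $e_j$ and $e_{j+1}$; this is visibly an equivalence relation on $E(G)$. Let $E_1,\dots,E_k$ be its classes, a partition of $E(G)$. The elementary observation used throughout is that any copy $H'$ of $H$ in $G$ has all of its edges in one class $E_i$ (its edges are pairwise $\sim$-related, via $m=1$), so $H'\subseteq G[E_i]$; consequently the copies of $H$ contained in $G[E_i]$ are precisely the copies of $H$ in $G$ all of whose edges lie in $E_i$. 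In particular every copy of $H$ in $G$ lies entirely in one of the $G[E_i]$, which is the last assertion of the lemma.

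Next I would check that each $G[E_i]$ is $H$-closed. For the covering requirement: since $G$ is $H$-closed, any edge $e\in E_i$ lies in some copy $H'$ of $H$ in $G$, and the observation gives $H'\subseteq G[E_i]$, so $e$ lies in a copy of $H$ inside $G[E_i]$; as each vertex of $G[E_i]$ is an endpoint of some edge of $E_i$ and the endpoints of $e$ lie in $V(H')$, every vertex of $G[E_i]$ likewise lies in a copy of $H$ inside $G[E_i]$. For the closedness requirement: let $H'$ be a copy of $H$ in $G[E_i]$; then $H'$ is a copy of $H$ in $G$, so, using that $G$ is $H$-closed, at least three edges of $H'$ are $H$-closed in $G$, that is, each of them lies in at least two copies of $H$ in $G$. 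For such an edge $e$ we have $e\in E_i$, so by the observation those copies are all contained in $G[E_i]$, whence $e$ is $H$-closed in $G[E_i]$. Thus at least three edges of $H'$ are $H$-closed in $G[E_i]$, so $H'$ is $H$-closed there, and $G[E_i]$ is $H$-closed.

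Finally I would verify that each $G[E_i]$ is an $H$-block. Suppose not: some proper non-empty $E'\subsetneq E_i$ is such that every copy of $H$ in $G[E_i]$ has either all of its edges in $E'$ or none of them in $E'$. Pick $e\in E'$ and $f\in E_i\setminus E'$; since $e$ and $f$ lie in the same class there is a sequence $e=e_0,e_1,\dots,e_m=f$ as in the definition of $\sim$, and every $e_j$ lies in $E_i$ (each $e_j\sim e$). For each $j$, a witnessing copy $K_j$ of $H$ contains $e_j\in E_i$, so $K_j\subseteq G[E_i]$ by the observation. Choosing $j$ with $e_j\in E'$ and $e_{j+1}\notin E'$ (possible since $e_0\in E'$ and $e_m\notin E'$), the copy $K_j$ has an edge in $E'$ and an edge in $E_i\setminus E'$, contradicting the choice of $E'$. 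Hence no such $E'$ exists, so $G[E_i]$ is an $H$-block, and the proof is complete.

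I expect the whole argument to be essentially bookkeeping; the only point that needs care is distinguishing ``$H$-closed'' for edges and for copies \emph{in $G$} from the same notions \emph{in $G[E_i]$}, and that is exactly where the observation that copies of $H$ never straddle two classes does the work.
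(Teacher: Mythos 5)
The paper does not supply a proof of this lemma—it is quoted verbatim from~\cite[Lemma~14]{NPSS17:ramsey_fram}—so there is nothing in the source to compare against. Your argument is, however, correct: the equivalence classes you define are exactly the edge sets of the $H$-components (connected components of the edge intersection graph $\calc_H(G)$), and your three verifications—that every copy of $H$ lies in a single class, that each $G[E_i]$ inherits $H$-closedness from $G$ because the copies of $H$ witnessing an edge's closedness never leave $E_i$, and that the chain from the definition of $\sim$ produces the straddling copy needed for the block condition—are all sound and complete. This is the natural proof, and almost certainly the one in the cited reference.
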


Let \Harr\ be an orientation of a graph~$H$.
We say~\Harr\ is \defi{$2$-Ramsey-avoidable}
if for all $e,f\in E(H)$, every orientation of~$e,f$
can be extended to an \Harr-free orientation of~$H$.

\begin{remark}\label{r:2-ramsey-avoidable}
  Let $k\ge 4$.
  If \Harr\ is either an orientation of~$C_k$, a transitive tournament~\ttk,
  or an anti-directed orientation of a graph~$H$ with~$\delta(H)>1$,
  then \Harr\ is $2$-Ramsey-avoidable.
\end{remark}

\begin{proof}
  Let $H$ be the underlying graph of~\Harr.
  In each of the following cases,
  let $e,f\in E(H)$ be chosen and oriented arbitrarily;
  it suffices to complete an \Harr-free orientation of~$H$.

  Suppose \Harr\ is an orientation of~$C_k$.
  Note that we can complete the orientation of~$e,f$ to orientations $\Carr_1,\Carr_2$ of~$C_k$ such that
  $\Carr_1$ has a block of length at least $k-1\geq 3$ and
  $\Carr_2$ has no long block.
  If \Harr\ has a block of length at least $k-1$,
  then we pick~$\Carr_2$, else we pick~$\Carr_1$.

  If $\Harr\simeq\ttk$,
  we complete the orientation of~$K_k$
  so that it contains a directed triangle
  (some triangle in~$H$ has
  at most one edge already oriented).

  In the remaining case (anti-directed graph),
  we complete the orientation of~$H$ forming a directed $3$-path
  (since $k\ge 4$,
  some~$v\in V(H)$ is incident with precisely one of~$e,f$,
  while $\delta(H)>1$ implies
  some other edge incident with $v$ has not been oriented).
\end{proof}

Remark~\ref{r:2-ramsey-avoidable} will be used with the next lemma and
Theorem~\ref{t:npss17_block} to establish our main results.

\begin{lemma}\label{l:component-to-blocks}
  Let $G$ be a graph and let \Harr\ be \defi{$2$-Ramsey-avoidable}.
  If $B\narr \Harr$ for each $H$-block $B$ of $G$, then $G\narr\Harr$.
\end{lemma}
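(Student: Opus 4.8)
The plan is to prove the contrapositive in spirit: given that every $H$-block of $G$ admits an $\Harr$-free orientation, I want to patch these together into a single $\Harr$-free orientation of all of $G$. First I would reduce to the case where $G$ is itself an $H$-component, since by the remark just before Lemma \ref{l:d-degenerate}, $G\narr\Harr$ if and only if each $H$-component admits an $\Harr$-free orientation, and edges lying in no copy of $H$ can be oriented arbitrarily. So assume $G$ is an $H$-component; in particular $G$ is $H$-closed (every vertex and edge lies in a copy of $H$, and — this needs a small check — every copy of $H$ in an $H$-component is $H$-closed, which follows because two copies sharing an edge force that edge to be $H$-closed, and connectivity of $\calc_H(G)$ propagates this). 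Then Lemma \ref{l:decomp-in-blocks} gives a partition $E(G)=E_1\cup\cdots\cup E_k$ with each $G[E_i]$ an $H$-block and every copy of $H$ contained in some single $G[E_i]$.

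Next I would orient $G$ block by block using a construction-sequence argument. Fix an $H$-block $B_1=G[E_1]$ and an $\Harr$-free orientation $\vec B_1$ of it, which exists by hypothesis. Using the construction sequence machinery from the excerpt, I would order the blocks $B_1,B_2,\ldots,B_k$ so that each $B_{i+1}$ shares at least one edge with $B_1\cup\cdots\cup B_i$ (this is possible because $G$ is connected as an $H$-component, hence the "block graph" whose vertices are the $B_i$ and whose edges record shared edges is connected). Now process the blocks in this order: having oriented $B_1\cup\cdots\cup B_i$ in an $\Harr$-free way, I want to orient the new edges of $B_{i+1}$. The key point is that $B_{i+1}$ and $B_1\cup\cdots\cup B_i$ overlap only in a set of edges, and I claim that overlap consists of at most one or two edges — more precisely, I would argue via the $H$-block structure that two distinct $H$-blocks in the partition share at most two edges (indeed, since every copy of $H$ lives inside a single block, a shared edge is not in a "crossing" copy of $H$; and the $H$-block minimality condition forces shared edge-sets to be small — here is where I would use that for any proper non-empty $E'\subsetneq E(B_{i+1})$ some copy of $H$ in $B_{i+1}$ straddles $E'$, so if the overlap had three or more edges one could derive a contradiction with every copy of $H$ being confined to one block).

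Once the overlap between $B_{i+1}$ and the already-oriented part is at most two edges, say $e$ and $f$ (or fewer), those edges already carry an orientation. Since $\Harr$ is $2$-Ramsey-avoidable, that partial orientation of $e,f$ extends to an $\Harr$-free orientation of the whole block $B_{i+1}$; I would take such an extension and use it to orient the new edges of $B_{i+1}$. Crucially, any copy of $\Harr$ in the updated graph $B_1\cup\cdots\cup B_{i+1}$ lies entirely within a single $H$-block (by Lemma \ref{l:decomp-in-blocks}), and each block is now oriented $\Harr$-freely, so no copy of $\Harr$ appears; thus the invariant is maintained. After $k$ steps every edge of $G$ is oriented and the orientation is $\Harr$-free, proving $G\narr\Harr$.

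The main obstacle I expect is the combinatorial claim that two $H$-blocks from the Lemma \ref{l:decomp-in-blocks} partition overlap in at most two edges (so that $2$-Ramsey-avoidability is exactly the right hypothesis to invoke). If that bound fails in general, the fallback is to observe that whatever the overlap $E'=E(B_{i+1})\cap(E_1\cup\cdots\cup E_i)$ is, it is a proper non-empty subset of $E(B_{i+1})$ unless $B_{i+1}$ is entirely contained in the earlier blocks (in which case there is nothing to orient), and then one should strengthen the hypothesis or use a more refined extension property; but given the way Remark \ref{r:2-ramsey-avoidable} is phrased (handling the orientation of \emph{two} edges), the intended argument is surely that the relevant overlap can always be reduced to at most two edges, perhaps by choosing the block order more cleverly or by iterating the construction-sequence one copy of $H$ at a time rather than one block at a time. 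Either way, the skeleton — reduce to a component, split into blocks, orient blocks in a connected order, extend across the small overlap using $2$-Ramsey-avoidability, invoke Lemma \ref{l:decomp-in-blocks} to confine copies of $\Harr$ — is the proof.
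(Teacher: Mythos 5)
Your plan has a genuine gap right at the outset: an $H$-component is \emph{not} automatically $H$-closed.  Being $H$-closed requires that every copy of~$H$ in~$G$ has at least \emph{three} $H$-closed edges; in an $H$-component you only know that every copy shares at least one edge with another copy (so at least one of its edges is $H$-closed), and the example of a graph consisting of exactly two copies of~$H$ sharing a single edge already defeats your claim.  Once that step fails, Lemma~\ref{l:decomp-in-blocks} does not apply directly to your component, and the rest of the argument has nothing to stand on.

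There is a second misreading that worth flagging, because it would have steered you away from the gap.  Lemma~\ref{l:decomp-in-blocks} produces a \emph{partition} $\{E_1,\ldots,E_k\}$ of the edge set, so the $H$-blocks $G[E_1],\ldots,G[E_k]$ are \emph{edge-disjoint}; there is no two-edge overlap between blocks to patch over, and $2$-Ramsey-avoidability plays no role at all in gluing the block orientations together --- one simply takes the disjoint union of an $\Harr$-free orientation of each block, and every copy of~$H$ lies inside a single block, so no copy of~$\Harr$ can arise.  The place where $2$-Ramsey-avoidability is actually needed, and which your plan omits, is to bridge the gap between the given graph~$G$ and the $H$-closed subgraph on which Lemma~\ref{l:decomp-in-blocks} applies.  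The paper does this by a peeling argument: repeatedly pick a copy~$F_i$ of~$H$ with at most two $H$-closed edges and delete its non-closed edges, continuing until the remaining graph~$F$ is $H$-closed; orient $F$ by the block decomposition as above; then reinstate the peeled edges in reverse order, at each step extending the orientation of the (at most two) surviving edges of~$F_i$ to an $\Harr$-free orientation of all of~$F_i$ --- which is exactly what $2$-Ramsey-avoidability permits.  Your proposal would need this peeling mechanism (or an equivalent) to become a proof.
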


\begin{proof}
  Let $H$ be the underlying graph of~\Harr.
  To show that $G$ admits an \Harr-free orientation,
  we may assume each edge of~$G$ lies in a copy of~$H$
  (the orientation of other edges is irrelevant).

  Let $G_0=G$ and, for each $i = 1,2,\ldots$ proceed as follows.
  If $G_{i-1}$ is $H$-closed, then stop, set~$m\deq i -1$ and $F\deq G_{m}$.
  Otherwise, some copy $F_i$ of $H$ in~$G_{i-1}$
  has at most two $H$-closed edges in~$G_{i-1}$.
  Form $G_i$ by deleting from $G_{i-1}$ each non-$H$-closed edge of~$F_i$,
  and then each isolated vertex.
  Note that $G_{i-1} = G_i\cup F_i$ and
  that each $e\in E(G_i)$ lies in some copy of~$H$.

  Note that $F$ is $H$-closed.
  By Lemma~\ref{l:decomp-in-blocks},
  $F$ can be partitioned into a collection~$\calb$ of edge-disjoint $H$-blocks
  such that each copy of~$H$ in~$F$ lies entirely in some~$B\in\calb$.
  By assumption, $B\narr\Harr$ for each $B\in\calb$,
  so $F$ admits a \Harr-free orientation~\Farr\
  (the disjoint union of \Harr-free orientations of each $B\in\calb$).

  Finally, we extend $\Garr_m\deq \Farr$
  to an \Harr-free orientation~$\Garr_0$ of~$G$.
  For each $i= m,m-1,\ldots,1$, let $\Garr_{i-1}$ extend~$\Garr_i$
  by orienting the edges~$E(F_i)\setminus E(G_i)$
  so that~$F_i$ is \Harr-free
  (this is possible because~\Harr\ is $2$-Ramsey-avoidable).
  Clearly, no copy of~$H$ in~$G$ induces \Harr\ in~$\Garr$,
  so~$G\narr \Harr$.
\end{proof}

\section{Transitive triangles}
\label{s:ttt}

Let $\ttt$ be the transitive triangle. In this section
we show that the upper bound for~$\thr{\ttt}(n)$ given
in~Theorem~\ref{thm:cav18_threshold} is not tight.

\begin{theorem}
    \label{thm:bound_ttt}
    The function $\thr{\ttt}(n) = n^{-1/m(K_4)}$
    is the threshold for $G(n,p)\arr\ttt$.
\end{theorem}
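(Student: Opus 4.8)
The plan is to establish the two directions separately. For the $1$-statement, the claim is that $G(n,p) \to \ttt$ whenever $p \gg n^{-1/m(K_4)} = n^{-2/5}$. We have $m(K_4) = 6/4 = 3/2$, so $n^{-1/m(K_4)} = n^{-2/3}$. Wait, let me recompute: $e(K_4)=6$, $v(K_4)=4$, so $m(K_4)=3/2$ and $n^{-1/m(K_4)} = n^{-2/3}$. I would prove the $1$-statement by observing that $m_2(\ttt) = (3-1)/(3-2) = 2$, so $n^{-1/m_2(\ttt)} = n^{-1/2}$, and Theorem~\ref{thm:cav18_threshold} already gives $G(n,p) \to \ttt$ for $p \ge Cn^{-1/2}$. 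Since $n^{-2/3} \ll n^{-1/2}$... no, that's the wrong direction. The point of the theorem is precisely that the truth is $n^{-2/3}$, which is \emph{smaller} than the bound $n^{-1/2}$ coming from Theorem~\ref{thm:cav18_threshold}; so the regularity/container bound is \emph{not} tight, and I must prove a better $1$-statement directly. The key structural fact is: if $G \supseteq K_4$ then $G \to \ttt$, since every orientation of $K_4$ contains a transitive triangle (indeed every tournament on $4$ vertices has a transitive subtournament on $3$ vertices — this is the classical fact that $\vec R(\ttt)=4$). Hence $G(n,p) \to \ttt$ as soon as $G(n,p)$ contains a copy of $K_4$, and the threshold for containing $K_4$ is exactly $n^{-1/m(K_4)} = n^{-2/3}$ by the standard subgraph appearance theorem (Bollob\'as). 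This handles the $1$-statement: for $p \gg n^{-2/3}$, whp $K_4 \subseteq G(n,p)$, so whp $G(n,p)\to\ttt$.

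For the $0$-statement I must show that if $p \ll n^{-2/3}$ then whp $G(n,p) \narr \ttt$, i.e.\ whp $G(n,p)$ admits an orientation with no transitive triangle. The natural approach: whp $G=G(n,p)$ with $p \ll n^{-2/3}$ is $K_4$-free, and moreover every ``tangle'' of triangles is simple. I would argue that whp every subgraph of $G$ on few vertices is sparse; concretely, whp $G$ contains no subgraph $J$ with $v(J) \le v_0$ and $e(J) \ge \tfrac32 v(J)$ for a suitable constant $v_0$ (first-moment/union bound, using $p \ll n^{-2/3} = n^{-1/m(K_4)}$, since the expected number of copies of any fixed such $J$ tends to $0$). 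More precisely I want: whp every ``triangle-block'' of $G$ — a $C_3$-component, or better, each maximal $2$-connected union-of-triangles — has at most one cycle through its edges, equivalently every edge of $G$ lies in at most one triangle, OR at least the triangle hypergraph is ``linear enough'' that a greedy/explicit orientation avoiding transitive triangles exists.

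I would then give an explicit $\ttt$-free orientation. Note a useful reduction parallel to Lemma~\ref{l:component-to-blocks}: it suffices to orient each $C_3$-component of $G$ separately. So fix a $C_3$-component $D$. I claim whp each such $D$ is ``almost a single triangle'' — specifically, using the sparseness fact above applied recursively along a construction sequence of $D$, each new triangle added to the union shares exactly one edge (hence exactly two vertices) with the previous union and contributes a genuinely new vertex; this follows because sharing two edges would create a $K_4$ minus possibly an edge, which is still dense enough ($v=4$, $e=5$, $5/4 > 1 = m(K_3)$... need $e/v > $ the relevant threshold, and $n^{-1} p^5 \to 0$ iff $p \ll n^{-1/5}$, which does hold since $n^{-2/3}\ll n^{-1/5}$) — so whp no such dense configuration appears, and therefore every $C_3$-component is ``cactus-like'': a tree of triangles glued along single edges (plus possibly along single vertices). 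For such a cactus of triangles one orients each triangle as a \emph{directed} triangle (a $3$-cycle), independently; a directed triangle contains no transitive triangle, and since triangles meet only in a single shared edge or vertex, and there is no $4$-vertex tangle, one can check no transitive triangle is created across two triangles — the only triangles in $D$ are the constituent ones (again because a ``new'' triangle using edges from two distinct constituent triangles would force a dense subgraph that whp does not occur). The main obstacle, and the step needing the most care, is this last structural claim: pinning down exactly which small dense configurations must be forbidden so that (a) they are all whp absent when $p \ll n^{-2/3}$, and (b) their absence guarantees that the only triangles of $G$ are the ``obvious'' ones and that the per-triangle directed orientation is globally $\ttt$-free. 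Once the list of forbidden configurations is fixed, part (a) is a routine first-moment computation and part (b) is a short case analysis.
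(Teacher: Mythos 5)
Your $1$-statement is exactly the paper's: $K_4\arr\ttt$, and for $p\gg n^{-2/3}$ whp $K_4\subseteq G(n,p)$ by the subgraph-appearance theorem. That part is fine.

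For the $0$-statement your high-level plan is also the paper's — forbid certain dense configurations whose expected count tends to $0$, then orient each constituent triangle as a directed $3$-cycle — but you explicitly leave the decisive structural step open, and the one concrete density claim you do make is wrong. You assert that $K_4^-$ ($v=4$, $e=5$) is whp absent because ``$n^{-1}p^5\to 0$''; the relevant quantity is the expected number of copies, which is of order $n^4p^5$, and at $p\asymp n^{-2/3}$ this is $n^{2/3}\to\infty$. Since $m(K_4^-)=5/4<3/2$, $K_4^-$ appears in abundance below the threshold, and more generally whp $K_3$-components are not ``cactus-like'' in the loose sense you describe: any union of $t$ triangles glued pairwise along edges has $t+2$ vertices and $2t+1$ edges, so already for $t\ge 4$ its density reaches $3/2$ and it is ruled out, whereas $K_4^-$ ($t=2$) and the path of three triangles ($t=3$) do appear and must be accommodated by the orientation.

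The paper pins this down precisely. Proposition~\ref{p:K3-paths} shows that if a $K_3$-component $C$ contains a cherry $uw,vw$ with $uv\notin E$ that must be closed by a construction step (a ``type $(C)$'' step — exactly one new edge, no new vertex), then $C$ already contains $K_4$, $W_5$, or a graph on $6$ vertices with $9$ edges; each of these has maximum density at least $3/2$, so a first-moment bound shows they are whp absent when $p\ll n^{-2/3}$. Proposition~\ref{p:AB-constructible} then shows that a component admitting a construction sequence with only type-$(A)$ and type-$(B)$ steps can be oriented triangle-by-triangle as directed $3$-cycles without ever creating a $\ttt$, precisely because in such a sequence each step introduces exactly one new copy of $K_3$. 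So the forbidden set you were reaching for is not $K_4^-$ (which is harmless and present) but $\{K_4, W_5\}\cup\{\text{graphs with }v=6,\ e=9\}$; once that list is in hand, your part (a) is indeed a routine first-moment computation and your part (b) is exactly Proposition~\ref{p:AB-constructible}. As written your argument has a genuine gap at this step, and the $K_4^-$ claim would need to be deleted rather than repaired.
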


Let $W_5$ be the graph we obtain by adding to~$C_4$ a new universal vertex.

\begin{proposition}\label{p:K3-paths}
  If $G$ is a $K_3$-component such that
   $uw,vw\in E(G)$ and~$uv\notin E(G)$, then
   there exists $J\subseteq G$ such that
   either $v(J)=6$ and~$e(J)=9$ or
   $J+ uv$ is isomorphic to $K_4$~or~$W_5$.
\end{proposition}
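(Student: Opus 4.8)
The plan is to exploit the local structure of a $K_3$-component around the "cherry" $uw,vw$ and to push outward using the construction-sequence machinery. Since $G$ is a $K_3$-component, every edge lies in a triangle, so in particular $uw$ lies in a triangle $uwx$ and $vw$ lies in a triangle $vwy$ for some vertices $x,y$ (possibly $x=y$, possibly $x=v$ is forbidden since $uv\notin E(G)$, and similarly $y\ne u$). If $x=y$, then $\{u,v,w,x\}$ spans at least the five edges $uw,vw,uw\text{'s partner},\dots$; more carefully, $uwx$ and $vwx$ give edges $uw,vw,wx,ux,vx$, so $G[\{u,v,w,x\}]=K_4-uv$, and $J+uv\cong K_4$, giving the conclusion. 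So assume $x\ne y$. Then on $\{u,v,w,x,y\}$ we have the edges $uw,vw,wx,ux,wy,vy$: that is six edges on five vertices, and adding $uv$ would give the wheel-like graph — but it need not be $W_5$ yet, since we are missing $xy$. The point of the case analysis is to either find the missing edge $xy$ (producing $W_5$, since $W_5 = C_4(u\text{-}x\text{-}v\text{-}y) + \text{hub }w$ once $uv$ is added — wait, one must check: $W_5$ is $C_4$ plus a universal vertex, so $W_5+$nothing already has $9$ edges on $5$ vertices; here adding $uv$ to $G[\{u,v,w,x,y\}]$ we need exactly the $C_4$ $u\,x\,v\,y\,u$ plus hub $w$, i.e. we need $xy\notin E$ is fine and we need $uv$ — so $J=G[\{u,v,w,x,y\}]$ with $J+uv\cong W_5$ provided $G[\{u,v,w,x,y\}]$ has no further edges), or else to produce a denser subgraph on six vertices.

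The main work is the case $x\ne y$ where $G[\{u,v,w,x,y\}]+uv \not\cong W_5$, i.e. there are extra edges among $\{u,v,w,x,y\}$ beyond the six named ones (the candidates being $ux$'s... no: the missing pairs are $uv$ (excluded), $uy$, $vx$, $xy$). If $xy\in E(G)$, then $\{u,v,w,x,y\}$ carries $uw,vw,wx,wy,ux,vy,xy$ — seven edges on five vertices — and one checks this already forces (after possibly adding $uv$) a subgraph we can handle, or we simply keep pushing: a $K_3$-component forces every edge into a triangle, so $xy$ lies in a triangle $xyz$. If $z\in\{u,v,w\}$ we get $\ge 8$ edges on $5$ vertices, whence some $6$-vertex subgraph (adding a further triangle through an as-yet-uncovered edge, which exists since every edge is in a triangle) reaches $9$ edges; if $z\notin\{u,v,w,x,y\}$ we get $6$ vertices $\{u,v,w,x,y,z\}$ carrying $uw,vw,wx,wy,ux,vy,xy,xz,yz$, which is already $9$ edges, giving $v(J)=6$, $e(J)=9$. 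The remaining configuration, $xy\notin E(G)$ but $uy\in E(G)$ or $vx\in E(G)$ (say $vx\in E(G)$ by symmetry), is handled the same way: $vx$ lies in a triangle $vxz$; tracking where $z$ can land (inside $\{u,v,w,x,y\}$ versus new) and counting edges yields either $9$ edges on $6$ vertices, or collapses into the $K_4$ / $W_5$ alternative. Throughout, whenever we are forced to introduce a new vertex $z$ via "every edge lies in a triangle", we use that $G$ is a $K_3$-component, not merely that $G\to K_3$-free orientations fail; this is exactly the hypothesis we are given.

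The hard part will be organizing the case split so it is genuinely exhaustive and terminates: the naive recursion "pick an uncovered edge, find its triangle, add a vertex" could in principle wander, so one wants a bounded-size certificate. I expect the clean way is to observe that once we have the cherry $uw,vw$ plus the two triangles $uwx$, $vwy$ (with $x\ne y$), we are already at $5$ vertices and $6$ edges; adding $uv$ gives $5$ vertices, $7$ edges, and $m(K_4)=3/2 > 7/5$... hmm, that ratio is $7/5<3/2$, so density alone is not yet enough — which is precisely why the proposition's conclusion is a finite list rather than a density bound, and why we must actually locate one more edge or one more triangle. So the real content is: from "$5$ vertices, $6$ edges, the two degree-$\ge$... configuration", at most one more triangle-completion step is needed, and that step lands in one of the three named outcomes. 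Verifying that each branch of "where does the new vertex/edge go" indeed yields $K_4$, $W_5$, or a $(6,9)$-graph, and that no branch escapes, is the routine-but-careful heart of the argument; the case where the new triangle reuses an old vertex (raising the edge count on $5$ vertices to $8$ or more, forcing a dense $6$-vertex piece via one further triangle) and the case where it is genuinely new (landing directly on $(6,9)$) should be the two sub-branches to write out.
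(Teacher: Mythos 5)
There is a genuine gap, and an arithmetic error is masking it. You write that $W_5$ has $9$ edges on $5$ vertices, but $W_5 = C_4$ plus a universal vertex has only $4+4=8$ edges. So when $x\neq y$ and $G[\{u,v,w,x,y\}]$ has \emph{exactly} the six edges $uw,vw,wx,ux,wy,vy$ (no $xy$, $uy$, or $vx$), the graph $J+uv$ with $J=G[\{u,v,w,x,y\}]$ has only $7$ edges and is not isomorphic to $W_5$; nor is $J$ a $(6,9)$-graph or part of a $K_4$. Your case split never actually handles this configuration: you list it as the ``clean'' $W_5$ branch, but it is not. In this situation the triangles $uwx$ and $vwy$ share only the vertex $w$, not an edge, so they are non-adjacent in the edge-intersection graph $\calc_{K_3}(G)$ and are joined there only by a path of a priori unbounded length --- which is exactly the non-terminating recursion you flag at the end as the ``hard part'' and leave unresolved.

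The paper avoids the open-ended exploration by taking a \emph{shortest} path $F_1F_2\cdots F_s$ in $\calc_{K_3}(G)$ with $uw\in E(F_1)$ and $vw\in E(F_s)$ (note $s\geq 2$ since $uv\notin E(G)$). Minimality then forces $|E(F_i)\cap E(F_{i-1})|=1$, $|E(F_i)\cap\bigcup_{j<i}E(F_j)|=1$, and that no edge lies in three of the $F_i$; from these three facts the cases $s=2$, $s=3$, $s\geq 4$ give a $J$ with $J+uv\cong K_4$, $J+uv\cong W_5$, and a $6$-vertex, $9$-edge subgraph respectively (for $s\geq 4$ one only needs $F_1\cup\cdots\cup F_4$). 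This replaces your undetermined case split with a single bounded parameter $s$. To salvage your approach you would need to impose exactly that shortest-path discipline on the chain of triangles, at which point you have reproduced the paper's argument.
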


\begin{proof}
  Let $F_1\cdots F_s$ be a shortest path in~$\calc_{K_3}(G)$
  such that $uw\in E(F_1)$ and $vw\in E(F_s)$.
  It suffices to show the following.
  \begin{itemize}
  \item If $s=2$, then $J\deq F_1\cup \cdots \cup F_s$
    satisfies $J + uv  \simeq  K_4$.
  \item If $s=3$, then $J\deq F_1\cup \cdots \cup F_s$
    satisfies $J + uv\simeq W_5$.
  \item If $s\ge 4$, then $J\deq F_1\cup F_2\cup F_3\cup F_4$ satisfies $v(J)=6$ and~$e(J)=9$.
  \end{itemize}

It is simple to check that the following hold by the choice of~$F_1\cdots F_s$.
\begin{enumerate}[label=(\roman*)]
\item $|E(F_i)\cap E(F_{i-1})|=1$ for all~$i\in[s]\setminus\{1\}$;\label{it:K3-paths/ii}
\item $\bigl|E(F_i)\cap \bigcup_{j < i} E(F_j)\bigr| = 1$
  for each~$i\in[s]\setminus\{1\}$; and\label{it:K3-paths/iii}
\item Each $e\in E(G)$ belongs to at most two triangles in~$F_1\cup \cdots \cup F_s$.\label{it:K3-paths/i}
\end{enumerate}
The statement for $s=2$ follows by~\ref{it:K3-paths/ii} since~$F_1\simeq K_3$.
If~$s=3$, then $v(J)=5$ (by~\ref{it:K3-paths/ii}~and~\ref{it:K3-paths/iii}), so $J + uv\simeq W_5$.
 By~\ref{it:K3-paths/ii}, for each $i\in[s]\setminus \{1\}$ we have
$|V(F_i) \setminus \bigcup_{j\in[i-1]} V(F_j)| \le 1$,
so $v(F_1\cup \cdots \cup F_s) \le s + 2$.
Moreover,  \ref{it:K3-paths/iii} implies~$e(F_i) =2i + 1$ for each~$i\in[s]$.
If~$s=4$, then $e(J) = 9$. Clearly $5 \le v(J)\le 6$; note that~$v(J) \neq 5$ as
otherwise there exists~$e\in E(J)$ which belongs to three distinct triangles in~$J$,
contradicting~\ref{it:K3-paths/i}.
\end{proof}

Let~$(H_1,\dots,H_t)$ be a~$K_3$-component.
For each~$i \in [t-1]$, either
\Aplain{} there are two new edges in~$H_{i+1}$ and one new vertex in~$H_{i+1}$;
\Bplain{} there are two new edges in~$H_{i+1}$ and~$V(H_{i+1})= V(H_{i})$; or
\Cplain{} there is exactly one new edge in~$H_{i+1}$ and~$V(H_{i+1}) = V(H_i)$.
A graph~$H$ is~\defi{$AB$-constructible} if
no construction sequence of a $K_3$-component of~$H$
contains a step of type~\Cplain{}.

\begin{proposition}\label{p:AB-constructible}
  If a graph~$H$ is $AB$-constructible, then $H\narr \ttt$.
\end{proposition}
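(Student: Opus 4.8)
The plan is to produce an explicit $\ttt$-free orientation of an arbitrary $K_3$-component $C$ of $H$, building it along an $AB$-construction sequence of $C$ and maintaining the invariant that every directed path has at most one edge. Recall $\ttt$ is the transitive triangle, so an orientation is $\ttt$-free precisely when no vertex is both the head of one triangle edge and the tail of another within a common triangle — equivalently, every triangle is oriented cyclically. So the real target is: every triangle of $C$ receives a cyclic orientation. Since each edge of $H$ lies in some $K_3$-component and it suffices to orient each component separately (as noted before Lemma~\ref{l:component-to-blocks}), it is enough to handle a single component $C$.

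First I would fix a construction sequence $(H_1,\dots,H_t)$ of $C$, which by hypothesis uses only steps of type \Aplain{} and \Bplain{}. Orient $H_1\simeq K_3$ cyclically. Then I would induct: given an orientation of $H_i$ in which every triangle is cyclic, and a copy $H'$ of $K_3$ with $H_{i+1}=H_i\cup H'$, I orient the new edge(s) of $H'$ so that $H'$ is a directed triangle, then verify that no triangle of $H_{i+1}$ has been spoiled. In a type-\Aplain{} step there are two new edges and one new vertex $v$; the new edges are the two edges of $H'$ at $v$, and $v$ has degree $2$ in $H_{i+1}$ among the triangle's edges — so any triangle of $H_{i+1}$ through $v$ must \emph{be} $H'$ (a triangle needs $v$ to have degree $2$ inside it, and $v$ only meets these two new edges), hence the only triangle affected is $H'$ itself, which we oriented cyclically. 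In a type-\Bplain{} step, $V(H_{i+1})=V(H_i)$ and there are two new edges, say $xy$ and $yz$, with $xz\in E(H_i)$ (these two edges together with an old edge must close the triangle $H'$, since $H'$ has exactly one old edge by definition of the step). Here the danger is a triangle of $H_{i+1}$ using exactly one of the new edges. The point to check is that such a triangle would force a configuration contradicting Proposition~\ref{p:K3-paths} or the two-triangles-per-edge bound; more carefully, I expect to argue that since the construction is minimal/greedy one may assume $H'$ was chosen so that each new edge lies in only the one new triangle $H'$, or else handle the finitely many exceptional local configurations ($K_4$, $W_5$, the $(6,9)$-graph) by hand, checking each admits a $\ttt$-free orientation extending the cyclic orientation of $H_i$.

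The main obstacle is exactly the type-\Bplain{} bookkeeping: when two new edges are added without a new vertex, they can a~priori create several new triangles, and orienting $H'$ cyclically need not orient those other new triangles cyclically. I anticipate resolving this by invoking \ref{it:K3-paths/i} (every edge lies in at most two triangles within the relevant union) together with Proposition~\ref{p:K3-paths}'s classification: either the local structure around the two new edges is one of $K_4$, $W_5$, or the $6$-vertex $9$-edge graph — each of which can be oriented $\ttt$-freely directly, and one checks such an orientation is compatible with (agrees on the shared old edges with) a cyclic orientation of the already-built part — or the new triangles are edge-disjoint enough that each can be made cyclic independently. A clean way to package this is to show that an $AB$-constructible graph contains no copy of any graph $J$ with $J+uv\simeq K_4$ or $W_5$ and no $(6,9)$-subgraph $J$ arising as in Proposition~\ref{p:K3-paths}; then every configuration $uw,vw\in E(C)$, $uv\notin E(C)$ is excluded, which forces the triangle hypergraph of $C$ to be simple enough (e.g.\ a "cactus-like" structure on triangles) that the greedy cyclic orientation never conflicts. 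I would then finish by the induction above, concluding $C\narr\ttt$ and hence $H\narr\ttt$.
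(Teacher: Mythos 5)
You have the right high-level strategy --- orient each new triangle as a directed cycle along the construction sequence --- and your analysis of type-\Aplain{} steps is correct.  But the resolution you propose for type-\Bplain{} steps has two genuine problems.  First, the $6$-vertex, $9$-edge subgraph arising in the $s\ge 4$ case of Proposition~\ref{p:K3-paths} (four triangles forming a path in~$\calc_{K_3}$ with consecutive triangles sharing one edge) is \emph{itself} $AB$-constructible: every construction sequence of it consists solely of type-\Aplain{} steps, so there is no hope of excluding it from $AB$-constructible graphs.  Second, $K_4\arr\ttt$, so $K_4$ admits no $\ttt$-free orientation; the plan of ``orienting each exceptional configuration $\ttt$-freely directly'' therefore breaks down on $K_4$ as stated.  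So neither branch of your proposed packaging goes through, and this is where the proof has a real gap.

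The missing step is much simpler and does not use Proposition~\ref{p:K3-paths} at all: \emph{in an $AB$-constructible graph, a type-\Bplain{} step creates exactly one new triangle.}  Suppose the two new edges $e_1=xy$ and $e_2=yz$ (with $xz\in E(H_i)$) closed a second new triangle $T_2\neq H'$.  Then $T_2$ uses exactly one of the new edges, say $e_1$, together with two edges $xw,yw$ already in~$H_i$.  Reordering the construction sequence so that $T_2$ is added to~$H_i$ \emph{first} produces a step that introduces only the single edge~$e_1$ and no new vertex --- a step of type~\Cplain{} --- contradicting $AB$-constructibility.  With this observation, at every step exactly one new triangle appears and it shares at most one already-oriented edge with the part built so far, so the greedy cyclic orientation is well-defined and $\ttt$-free.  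This is exactly the one-line argument the paper gives in place of your case analysis.
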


\begin{proof}
  We may assume that $H$ is itself a single $K_3$-component $(H_1,\ldots,H_t)$,
  as edges which do not belong to a copy of~$K_3$ in~$H$
  can be arbitrarily oriented
  and distinct $K_3$-components may be independently oriented.
  First note that, at each step, exactly one new copy of~$K_3$ is added.
  This is clearly true for steps of type \Aplain{}.
  Moreover, it is easy to see that
  if $H_{\alpha+1}$ is created by a step of type \Bplain{}
  and the new edges create two distinct copies of~$K_3$ in~$H_{\alpha +1}$,
  then $H$ admits a construction sequence with a step of type~\Cplain{},
  a contradiction.
  We orient $H_1$ forming a directed triangle
  and, for each $\alpha\in[t-1]$,
  orient the two new edges in~$H_{\alpha+1}$
  so as to form a new directed triangle.
  The resulting orientation is \ttt-free.
\end{proof}

Our final ingredient is the following classical result
(see, e.g.,~\cite{janson00:_random_graph}).

\begin{theorem}[{\cite{janson00:_random_graph}}]
  \label{thm:threshold_subgraph}
    Let $H$ be a fixed graph.
    Then
    \begin{equation*}
        \lim_{n \to \infty}
        \bbp[H \sseq G(n,p)]
        =
        \begin{cases}
            1, \text{ if } p \gg n^{-1/m(H)},\\
            0, \text{ if } p \ll n^{-1/m(H)}.
        \end{cases}
    \end{equation*}
\end{theorem}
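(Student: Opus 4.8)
The plan is to prove the $0$-statement by a first-moment argument and the $1$-statement by a second-moment argument, following the classical proof (see~\cite{janson00:_random_graph}). We may assume $e(H)\ge 1$, since otherwise $H\sseq G(n,p)$ holds deterministically once $n\ge v(H)$. Write $v\deq v(H)$ and $e\deq e(H)$; let $N=N(n)$ be the number of copies of $H$ in $K_{n}$, which is a polynomial in $n$ of degree $v$, so $N=\Theta(n^{v})$; and let $X$ count the copies of $H$ in $G(n,p)$, so that $\bbe[X]=Np^{e}=\Theta(n^{v}p^{e})$ and, more generally, the expected number of copies in $G(n,p)$ of any fixed $F\sseq H$ is $\Theta(n^{v(F)}p^{e(F)})$.

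For the $0$-statement, suppose $p\ll n^{-1/m(H)}$ and pick $J\sseq H$ with $v(J)\ge 1$ attaining $e(J)/v(J)=m(H)$; as $e(H)\ge 1$ we have $m(H)\ge 1$, hence $e(J)\ge 1$. Since $H\sseq G(n,p)$ forces $J\sseq G(n,p)$, it suffices to show that whp $G(n,p)$ has no copy of $J$. But the expected number of copies of $J$ is $\Theta\bigl(n^{v(J)}p^{e(J)}\bigr)=\Theta\bigl((n^{1/m(H)}p)^{e(J)}\bigr)\to 0$, since $n^{1/m(H)}p\to 0$ and $e(J)\ge 1$, and Markov's inequality finishes this direction.

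For the $1$-statement, suppose $p\gg n^{-1/m(H)}$, and set $\Phi\deq\min\{\,n^{v(F)}p^{e(F)}:F\sseq H,\ e(F)\ge 1\,\}$. I would first observe that $\Phi\to\infty$: for each such $F$ we have $e(F)/v(F)\le m(H)$, so $n^{v(F)}p^{e(F)}=(n^{v(F)/e(F)}p)^{e(F)}\ge(n^{1/m(H)}p)^{e(F)}\to\infty$, and since there are only $O(1)$ isomorphism types of subgraphs of $H$, the minimum $\Phi$ also tends to infinity. (Taking $F=H$ gives $n^{v}p^{e}\ge\Phi$, hence $\bbe[X]=\Theta(n^{v}p^{e})\to\infty$.) Next I would split $\bbe[X^{2}]=\sum_{(H_{1},H_{2})}p^{e(H_{1}\cup H_{2})}$, the sum over ordered pairs of copies of $H$ in $K_{n}$, according to whether $H_{1}$ and $H_{2}$ share an edge. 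The pairs with $E(H_{1})\cap E(H_{2})=\emptyset$ number at most $N^{2}$ and each contributes $p^{2e}$, for a total of at most $N^{2}p^{2e}=\bbe[X]^{2}$. For the remaining pairs, group by the isomorphism type of the overlap graph $F\deq H_{1}\cap H_{2}$ (now with $e(F)\ge 1$): there are $O(n^{2v-v(F)})$ pairs with a given type $F$, each contributing $p^{2e-e(F)}$, so these pairs contribute in total
\[
  \sum_{\substack{F\sseq H\\ e(F)\ge 1}} O\bigl(n^{2v-v(F)}p^{2e-e(F)}\bigr)
  \ =\ O\!\Bigl((n^{v}p^{e})^{2}\!\!\!\sum_{\substack{F\sseq H\\ e(F)\ge 1}}\!\frac{1}{n^{v(F)}p^{e(F)}}\Bigr)
  \ =\ O\bigl(\bbe[X]^{2}/\Phi\bigr),
\]
using $n^{v(F)}p^{e(F)}\ge\Phi$ and that the sum has $O(1)$ terms. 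Hence $\bbe[X^{2}]\le\bbe[X]^{2}\bigl(1+O(1/\Phi)\bigr)$, so $\mathrm{Var}(X)=O(\bbe[X]^{2}/\Phi)=o(\bbe[X]^{2})$, and Chebyshev's inequality gives $\bbp[X=0]\le\mathrm{Var}(X)/\bbe[X]^{2}\to 0$; thus whp $H\sseq G(n,p)$.

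The only delicate step is the variance estimate --- more precisely, verifying that each overlap contribution indexed by a subgraph $F$ with at least one edge is $o(\bbe[X]^{2})$. This is exactly where the hypothesis $p\gg n^{-1/m(H)}$ enters, through $n^{v(F)}p^{e(F)}\ge\Phi\to\infty$ for every such $F$; the maximum density $m(H)$ is defined precisely so that this holds for all subgraphs of $H$ simultaneously. The rest --- the count $N=\Theta(n^{v})$, the bound on the number of overlapping pairs, and the applications of Markov's and Chebyshev's inequalities --- is routine.
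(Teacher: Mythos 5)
Your proof is correct and is the standard first-moment/second-moment argument for the subgraph containment threshold. The paper itself does not prove this result---it cites it as a classical theorem from the Janson--{\L}uczak--Ruci\'nski book---so there is no ``paper's own proof'' to compare against; your argument faithfully reconstructs the cited textbook proof, with the essential points in place: the $0$-statement via a densest subgraph $J$ and Markov, and the $1$-statement via the overlap decomposition of $\bbe[X^{2}]$, the observation that each edge-overlap term is $O(\bbe[X]^{2}/\Phi)$ with $\Phi\to\infty$, and Chebyshev.
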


\newcommand{\whp}{with high probability}%
\begin{proof}[Proof of Theorem~\ref{thm:bound_ttt}]
    If~$p \gg n^{-2/3}$, then
    $K_4\subseteq G(n,p)$ with high probability
    by~Theorem~\ref{thm:threshold_subgraph};
    hence $G(n,p) \arr \ttt$ \whp\
    (as~$K_4\arr \ttt$).

  Now suppose that~$p \ll n^{-2/3}$.
  Let $\mathcal{E}$ be the event that $G(n,p)$ is not $AB$-constructible.
  By Proposition~\ref{p:AB-constructible}, it suffices
  to show that~$\bbp[\mathcal{E}]=\littleo(1)$.
  Let $\calj$ be set of all nonisomorphic graphs of order~$6$ and size~$9$.
  By Proposition~\ref{p:K3-paths}, every $K_3$-component of~$G(n,p)$ which
  is not $AB$-constructible contains either $K_4$, $W_5$ or some~$J\in\calj$.
  Using Markov's inequality, we have
  \[
    \begin{aligned}\bbp[\mathcal{E}] &\le \bbp[K_4 \subseteq G(n,p)]
      +\bbp[W_5 \subseteq G(n,p)]
      +\sum_{J\in\calj} \bbp[J \subseteq G(n,p)] \\
      &\le \sum_{J \in \{K_4,W_5\}\cup\calj}\bbe\bigl[\, |\{J' \subseteq G(n,p) : J' \simeq J\}|\,\bigr]
      \,\,\,\le\,\,\,   n^4p^6 + n^5p^8 + |\calj|n^6p^9.
    \end{aligned}
  \]
  Since~$p \ll n^{-2/3}$ and~$|\calj|=\Theta(1)$, we have~$\bbp[\mathcal{E}]=\littleo(1)$.
\end{proof}

\section{Graphs with low maximum 2-density}

The following sections show that $G\narr\Harr$ for some classes of oriented graphs,
when \Harr\ has at least four vertices and $m(G)<\mm(\Harr)$.

\subsection{Transitive Tournaments}
\label{s:trans-tourn}

We denote a tournament on~$k$ vertices by~\defi{$T_k$},
writing \defi{\ttk}\ if it is transitive.

\begin{theorem}\label{t:tourn-dens-lbound}
  If $k\ge 4$ and $G$ is a graph with $m(G) < \mm(K_{k})$,
  then~$G\narr \ttk$.
\end{theorem}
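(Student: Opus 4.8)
The plan is to show directly that any graph $G$ with $m(G) < \mm(K_k)$ admits a $\ttk$-free orientation, exploiting the fact that $m(G) < \mm(K_k) = \binom{k}{2} - 1)/(k-2)$ forces $G$ to be sparse enough that it is degenerate with small degeneracy. First I would compute $\mm(K_k)$ explicitly: since $K_k$ is strictly $2$-balanced, $\mm(K_k) = (\binom k2 - 1)/(k-2) = (k+1)/2$. By Lemma~\ref{l:d-degenerate}, the hypothesis $m(G) < (k+1)/2$ gives that $G$ is $(k-1)$-degenerate (indeed $\delta(J) \le 2m(G) < k+1$, so $\delta(J) \le k$; a little care — using that a subgraph attaining $\delta = k$ would itself be dense — should push this down to $\delta(J)\le k-1$ for every $J\subseteq G$, i.e.\ $G$ is $(k-1)$-degenerate). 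Fix a degeneracy ordering $v_1,\dots,v_n$ of $V(G)$ in which each $v_i$ has at most $k-1$ neighbours among $v_1,\dots,v_{i-1}$.

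Next I would orient $G$ greedily along this ordering so as to avoid a transitive tournament on $k$ vertices. The key idea: orient every edge $v_iv_j$ with $i<j$ from $v_j$ to $v_i$ — that is, always point toward the earlier vertex. This is a standard "acyclic orientation along a vertex ordering". A copy of $\ttk$ in this orientation would be a set $S = \{v_{i_1},\dots,v_{i_k}\}$ with $i_1 < \dots < i_k$ whose source (the vertex with out-degree $k-1$ inside $S$) must be $v_{i_k}$, so $v_{i_k}$ is adjacent to all of $v_{i_1},\dots,v_{i_{k-1}}$, which already gives $v_{i_k}$ at least $k-1$ earlier neighbours; but the sink of the $\ttk$ is $v_{i_1}$, and one checks that then $v_{i_2}$ has out-degree $k-2$ inside $S$ toward $\{v_{i_1}\}\cup\cdots$ — pursuing this, the transitive tournament forces each $v_{i_j}$ to have all of $v_{i_1},\dots,v_{i_{j-1}}$ as back-neighbours, and in particular $v_{i_k}$ needs $k-1$ neighbours before it. That alone is consistent with $(k-1)$-degeneracy, so the naive argument is not quite enough. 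The fix is to be slightly cleverer: I would instead argue that a $\ttk$ appearing in \emph{any} orientation of $G$ would force $G[S]\simeq K_k$, and then show $K_k\not\subseteq G$ because $m(G) < \mm(K_k) \le m(K_k) = (k-1)/2 < (k+1)/2$ — wait, that inequality goes the wrong way. So the real content cannot be merely "$G$ has no $K_k$": $G$ may well contain $K_k$'s, but not too many overlapping ones.

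The correct approach, then, is the following refinement. Since $G$ is $(k-1)$-degenerate with ordering $v_1,\dots,v_n$, I orient each edge toward the earlier endpoint as above. Suppose for contradiction this orientation contains a transitive $\ttk$ on $S=\{v_{i_1},\dots,v_{i_k}\}$, $i_1<\dots<i_k$. In this orientation, a vertex's out-edges go to earlier vertices, so within $S$ the source (out-degree $k-1$) is $v_{i_k}$ and, recursively, the linear order of the transitive tournament is $v_{i_k},v_{i_{k-1}},\dots,v_{i_1}$, which forces $G[S] = K_k$. Now $K_k$ is strictly $2$-balanced, so $\mm(K_k) \le m_2$ of any graph containing it; more to the point, I would bound the number of vertices spanned by the union of \emph{all} $K_k$-copies through a fixed edge and deduce that a subgraph $F\subseteq G$ formed from enough overlapping $K_k$'s has $e(F)/v(F) \ge \mm(K_k)$, contradicting $m(G) < \mm(K_k)$. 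The cleanest way to organise this is: let $F$ be the $K_k$-component containing $S$ (in the sense of Section~\ref{s:tools}) and show, via a construction sequence $(H_1,\dots,H_t)$ for $F$ with $H_1 = K_k$, that at every step the ratio $e(H_i)/v(H_i)$ stays $\ge \mm(K_k)$ — because each new copy $H'$ of $K_k$ shares at least one edge (hence, being $K_k$, shares at least $2$ vertices) with $H_i$, contributing at most $k-2$ new vertices and at least $\binom k2 - 1$ new edges, and $(\binom k2 - 1)/(k-2) = \mm(K_k)$. Starting from $e(H_1)/v(H_1) = \binom k2 / k > \mm(K_k)$, induction gives $m(F) \ge e(F)/v(F) \ge \mm(K_k)$, contradicting $m(G)<\mm(K_k)$. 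Hence no such $S$ exists and $G\narr\ttk$.

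The main obstacle I anticipate is the inductive ratio bound in the last paragraph: one must verify carefully that when a new $K_k$-copy $H'$ is glued on sharing exactly one edge with $H_i$ (the worst case), it adds exactly $k-2$ vertices and $\binom k2 - 1$ edges, and that sharing more than one edge only helps; and one must handle the base case correctly, noting $e(K_k)/v(K_k) = (k-1)/2 > (k+1)/2 = \mm(K_k)$ fails for — let me recheck: $(k-1)/2$ versus $(k+1)/2$, the former is \emph{smaller}. So in fact $m(K_k) = (k-1)/2 < \mm(K_k)$, which means a single $K_k$ already has density below $\mm(K_k)$ and the naive induction does not even start. This is precisely why the theorem is subtle, and it suggests the actual proof must instead directly exhibit the $\ttk$-free orientation (the "point toward earlier vertex" orientation along a degeneracy order) and argue that degeneracy $< k$ combined with the transitivity constraint is genuinely incompatible — i.e.\ that a transitive $\ttk$ forces one of its vertices to have $\ge k$ neighbours among the others when the orientation respects a fixed linear order, which contradicts $(k-1)$-degeneracy of $G[S]$. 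I would therefore restructure the final step as: in the orientation above, a $\ttk$ on $S$ would have its \emph{sink} $v_{i_1}$ receiving $k-1$ arcs from within $S$, all from \emph{later} vertices, which is automatic; but its \emph{source} $v_{i_k}$ sends $k-1$ arcs within $S$ to \emph{earlier} vertices — fine too — so the obstruction must come from counting total edges: $G[S] = K_k$ has $\binom k2$ edges on $k$ vertices, and if $S$ lies in a degeneracy order the last vertex $v_{i_k}$ has $k-1$ back-neighbours, $v_{i_{k-1}}$ has $k-2$, etc., summing to exactly $\binom k2$, so degeneracy $k-1$ is \emph{attained}, not violated. The genuine resolution, which I expect the authors use, is a more careful local analysis showing $K_k\subseteq G$ together with $m(G)<\mm(K_k)$ limits how the $K_k$ extends, combined with an explicit avoidance orientation — and pinning down that combinatorial lemma is the crux.
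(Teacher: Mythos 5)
You do not, in the end, produce a proof: you explore three approaches, correctly diagnose why each one fails, and then explicitly concede that ``pinning down that combinatorial lemma is the crux'' without supplying it. Concretely: (1) the ``orient every edge toward the earlier vertex in a degeneracy order'' construction cannot work, because \emph{every} acyclic orientation of a $K_k$ is a transitive tournament, so that orientation has a $\ttk$ the moment $K_k \subseteq G$, which is not excluded by $m(G)<\mm(K_k)$ (you notice this yourself when you observe that degeneracy $k-1$ is \emph{attained} on $G[S]$, not violated); (2) the $K_k$-component density argument dies at the base case, since $m(K_k)=(k-1)/2<\mm(K_k)=(k+1)/2$, as you also correctly spot; (3) you never find the missing idea. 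Also, the step where you ``push down'' from $\delta(J)\le k$ to $\delta(J)\le k-1$ is not valid: a $k$-regular graph has $m(G)=k/2<(k+1)/2=\mm(K_k)$ yet is not $(k-1)$-degenerate, so the stronger degeneracy bound is simply false.

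The paper's actual argument is a local one that you never consider: induct on $v(G)$. By Lemma~\ref{l:d-degenerate}, since $2m(G)<k+1$ there is a vertex $u$ of degree at most $k$. Orient $G-u$ inductively to be $\ttk$-free, then add $u$ back and orient the at most $k$ edges at $u$ so that \emph{every} $K_k$ through $u$ acquires a directed triangle. This is possible precisely because $\deg(u)\le k$ forces all $K_k$'s through $u$ to overlap on $k-1$ or more neighbours of $u$: if only one $K_k$, say $K$, contains $u$, orient two edges $uv,uw$ so that $\{u,v,w\}$ is a directed triangle in $K$; if two (or more) do, then $\deg(u)=k$, any two such cliques $K,K'$ share $k-2\ge 2$ neighbours of $u$, and four well-chosen edges at $u$ create two directed triangles $\{u,v,x\}$ and $\{u,w,y\}$ (with $v\in K\setminus K'$, $w\in K'\setminus K$, $x,y\in K\cap K'$) such that any $K_k$ through $u$ contains one of them. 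That local gadget is the ``combinatorial lemma'' your write-up asks for but does not provide.
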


\begin{proof}
  The proof is
  by induction on $n:=v(G)$.
  The case~$n=1$ is trivial.
  Assume~$n \ge 2$ and that
  $G'\narr\ttk$ whenever~$m(G')<\mm(K_k)$
  and~$v(G')<n$.
  By Lemma~\ref{l:d-degenerate}, $\deg_G(u)\le k$
  for some~$u\in V(G)$. Let~$G'=G-u$,
  so~$m(G') < \mm(K_k)$ and
  $G'$~admits a $\ttt$-free orientation~$\Garr$.
  We shall extend $\Garr$ to an orientation of~$G$ such that
  each~$T_k$ containing~$u$ has a directed cycle.

  We may assume that~$u$ lies in some copy of~$K_k$, say~$K$;
  so~$\deg(u) \ge k-1$.
  If~$K$ is the only copy of~$K_k$ containing~$u$, then
  choose two vertices~$v,w \in V(K-u)$
  and orient the edges~$uv$ and~$uw$ so that~$\{u,v,w\}$
  induces a directed triangle.
  Otherwise let~$K'$ be some~$K_k$ containing~$u$ other than~$K$.
  Hence we must have~$\deg(u)=k$.
  Let~$v$ be the unique vertex in~$V(K) \setminus V(K')$,
  and~$w$ be the unique vertex in~$V(K') \setminus V(K)$.
  Since~$k-2\ge 2$, there are at least two vertices~$x$ and~$y$
  in~$V(K \cap K') \setminus \{u\}$.
  Orient the edges $uv$, $ux$, $uw$ and $uy$
  so that each of~$\{u,v,x\}$ and~$\{u,w,y\}$ induces a directed triangle.
  Since every $K_k$ containing~$u$ has at least three vertices in~$\{v,w,x,y\}$,
  the partial orientation of each~$K_k$ contains a directed cycle.
  Any remaining un-oriented edge may be arbitrarily oriented.
\end{proof}

\subsection{Anti-directed digraphs}
\label{s:anti-directed}

We now turn to anti-directed orientations of~$K_{t,t}$, $C_{2t}$ and other bipartite graphs.

\begin{theorem}\label{t:anti-directed}
  Let $G$ and~$H$ be graphs, where~$\delta(H)\ge 2$. If $\Harr$ is an
  anti-directed orientation of~$H$ and~$m(G) < \delta(H) -1/2$, then
  $G\narr \Harr$.
\end{theorem}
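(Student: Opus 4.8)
The plan is to argue by induction on $v(G)$, mirroring the structure of the proof of Theorem~\ref{t:tourn-dens-lbound}. Since $m(G) < \delta(H) - 1/2 < \delta(H)$, Lemma~\ref{l:d-degenerate} guarantees a vertex $u$ of degree at most $2m(G) < 2\delta(H) - 1$, hence $\deg_G(u) \le 2\delta(H) - 2$. Deleting $u$ gives $G' = G - u$ with $m(G') \le m(G) < \delta(H) - 1/2$, so by induction $G'$ admits an $\Harr$-free orientation $\Garr'$. The task is then to orient the at most $2\delta(H) - 2$ edges incident with $u$ so that no copy of $H$ through $u$ receives the anti-directed orientation $\Harr$.

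The key observation is what it takes for a copy of $H$ in $G$ to embed $\Harr$: in an anti-directed orientation, every vertex is either a \emph{source} (all incident arcs out) or a \emph{sink} (all incident arcs in), since $\delta(H)\ge 2$ actually $\delta(H)\ge 1$ suffices, and the bipartition of $H$ is into sources and sinks. So if $u$ plays the role of some vertex $x$ in a would-be copy of $\Harr$, then $u$ must be a pure source or pure sink \emph{within that copy}; and crucially, each neighbour $y$ of $x$ in $H$ satisfies $\deg_H(y)\ge\delta(H)\ge 2$, so $y$ is incident with at least one further edge of that copy whose orientation is already fixed by $\Garr'$ (it does not touch $u$). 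The idea is to pick one neighbour $u'$ of $u$ in $G$ and orient the edge $uu'$ so as to make $u'$ a source (say), then orient \emph{all} remaining edges $uz$ at $u$ making $u$ a source as well; then in any copy of $H$ containing the edge $uu'$, both endpoints $u$ and $u'$ would have to be sources, which is impossible in a bipartite $H$ where adjacent vertices lie in opposite parts — so no copy of $H$ through $uu'$ can be $\Harr$. The remaining copies of $H$ through $u$ avoid $uu'$, but still use $u$ and at least two of its other neighbours. Here I would use a counting/pigeonhole argument: the number of other edges at $u$ is at most $2\delta(H) - 3$, and I want to orient them so that every copy of $H$ through $u$ (avoiding $uu'$) has some vertex which is neither a pure source nor a pure sink within that copy.

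More carefully, I think the right move is to choose the orientation at $u$ greedily so that $u$ has both an in-neighbour and an out-neighbour \emph{inside every copy of $H$ through $u$} that is not already killed. Since any copy of $H$ through $u$ uses $\ge \delta(H)$ edges at $u$ (as $\deg_H(u')=\delta(H)$ for $u'$ playing $u$'s role could be exactly $\delta(H)$), and $\deg_G(u)\le 2\delta(H)-2 < 2\delta(H)$, two copies of $H$ through $u$ share at least $2\delta(H) - (2\delta(H)-2) = 2$ edges at $u$ — in particular any two such copies overlap at $u$ in at least two common edges. This strong overlap lets me orient the edges at $u$ using at most $2\delta(H)-2$ arcs so that there is at least one out-arc and one in-arc lying in the intersection of all copies: concretely, split the neighbourhood, orient roughly half in and half out, and use that every copy must contain edges of both halves because it misses at most $(2\delta(H)-2)-\delta(H) = \delta(H)-2$ edges at $u$, which is fewer than the size of each half once the halves have size $\ge \delta(H)-1$. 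Then in each copy, $u$ is neither a source nor a sink, so that copy is not $\Harr$.

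The main obstacle is the bookkeeping in that last step: making precise that, with $\deg_G(u) \le 2\delta(H)-2$, one can two-colour (orient) the edges at $u$ so that \emph{every} copy of $H$ through $u$ contains edges of both colours, given only that each such copy uses $\ge \delta(H)$ of those edges. The clean way is: partition $E_G(u)$ into two sets $P^+, P^-$ each of size $\ge \delta(H)-1$ (possible since $\deg_G(u)\ge\delta(H)\ge 2$ gives $\deg_G(u)\ge 2(\delta(H)-1)$ only when $\deg_G(u)\ge 2\delta(H)-2$, which is exactly our bound — so in fact $\deg_G(u)$ is squeezed to equal $2\delta(H)-2$ in the tight case, and both parts have size exactly $\delta(H)-1$). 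A copy of $H$ through $u$ uses $\ge\delta(H)$ edges at $u$, hence cannot be contained in either part (each of size $\delta(H)-1$), so it meets both $P^+$ and $P^-$; orienting $P^+$ out of $u$ and $P^-$ into $u$ then ensures $u$ is neither source nor sink in any copy. Edges not at $u$ and not in a copy of $H$ are oriented arbitrarily. I would also handle the degenerate small cases ($v(G)=1$, or $u$ in no copy of $H$) separately at the start, exactly as in Theorem~\ref{t:tourn-dens-lbound}. I do not expect the anti-directedness hypothesis $\delta(H)\ge 2$ to be needed beyond guaranteeing $\deg_G(u)\ge 2\delta(H)-2$ when $u$ lies in a copy of $H$ (so the two parts are non-empty) and ensuring the source/sink dichotomy is meaningful.
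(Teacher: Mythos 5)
Your proposal, in its final paragraph, is exactly the paper's argument: remove a vertex $u$ of degree at most $2\delta(H)-2$ (via Lemma~\ref{l:d-degenerate}), apply induction to $G-u$, and split the edges at $u$ into two direction classes each of size at most $\delta(H)-1$, so that any copy of $H$ through $u$ (which uses at least $\delta(H)$ edges at $u$) must have both an in-arc and an out-arc at $u$ and hence cannot be anti-directed. One small slip: the inequality in your parenthetical should read the other way --- you need each part to have size \emph{at most} $\delta(H)-1$, which follows directly from $\deg_G(u)\le 2\delta(H)-2$; there is no need for $\deg_G(u)$ to equal $2\delta(H)-2$ or for the parts to have size at least $\delta(H)-1$.
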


\begin{proof}
  We proceed by induction on~$v(G)$.
  If~$v(G) \le 2$, then~$G\narr\Harr$.
  Let $\delta\deq\delta(H)$.
  By~Lemma~\ref{l:d-degenerate}, there exists~$v\in V(G)$ with
  $\deg(v) = \delta(G)\le 2\delta -2$.
  By~induction, $G-v\narr\Harr$.
  Fix an $\Harr$-free orientation of~$G-v$, orient
  $\lfloor \delta(G)/2\rfloor$~edges incident with~$v$ towards~$v$ and the
  remaining $\lceil \delta(G)/2 \rceil$~edges away from~$v$.
  Note that
  any copy of~$H$ in~$G$ containing~$v$
  necessarily has two edges incident with~$v$ oriented in opposite directions,
  since~$\delta(H)\ge 2$
  and~$\lceil\delta(G)/2\rceil \le \delta(H)-1$.
\end{proof}

\begin{corollary}\label{cor:anti-directed}
  Let $G$ and~$H$ be graphs such that
  $H$ is strictly $2$-balanced,
  $\delta(H)\ge 2$
  and~$\mm(H) - \lfloor \mm(H) \rfloor \le 1/2$.
  If $\Harr$ is an anti-directed orientation of~$H$
  and~$m(G)<\mm(H)$, then~$G \narr \Harr$.
\end{corollary}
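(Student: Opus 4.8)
The plan is to deduce the corollary directly from Theorem~\ref{t:anti-directed}, whose hypothesis on~$G$ reads $m(G) < \delta(H) - 1/2$ rather than $m(G) < \mm(H)$. Hence it suffices to show that, under the hypotheses of the corollary, $\mm(H) \le \delta(H) - 1/2$: granting this, $m(G) < \mm(H) \le \delta(H) - 1/2$, and Theorem~\ref{t:anti-directed} gives $G \narr \Harr$ at once. (If $H$ admits no anti-directed orientation the statement is vacuous, so we may assume one exists; then $H$ is bipartite.)

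First I would record that a bipartite graph with $\delta(H) \ge 2$ has $v(H) \ge 4$, since each side of the bipartition then contains at least two vertices. Let $v \in V(H)$ have degree $\delta \deq \delta(H)$, and put $n \deq v(H)$ and $m \deq e(H)$. Then $H - v$ is a proper subgraph of~$H$ on $n-1 \ge 3$ vertices, so $\mm(H-v)$ is defined, and by strict $2$-balancedness $\mm(H-v) < \mm(H)$. Since $H-v$ has $e(H-v) = m-\delta$ edges and $v(H-v) = n-1$ vertices, taking $J = H-v$ in the definition of $\mm(H-v)$ gives $\mm(H-v) \ge (m-\delta-1)/(n-3)$. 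Therefore
\[
  \frac{m-\delta-1}{n-3} \;<\; \mm(H) \;=\; \frac{m-1}{n-2},
\]
where the equality holds because $H$ is strictly $2$-balanced. Both denominators are positive since $n \ge 4$, so cross-multiplying and simplifying leaves $\delta(n-2) > m-1$, that is, $\mm(H) = (m-1)/(n-2) < \delta$.

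Since $\delta$ is an integer, $\mm(H) < \delta$ forces $\lfloor \mm(H) \rfloor \le \delta - 1$. Using the hypothesis $\mm(H) - \lfloor \mm(H) \rfloor \le 1/2$, we then obtain
\[
  \mm(H) = \lfloor \mm(H) \rfloor + \bigl(\mm(H) - \lfloor \mm(H) \rfloor\bigr) \;\le\; (\delta - 1) + 1/2 = \delta - 1/2,
\]
which is exactly what was needed, and the corollary follows.

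I do not expect a genuine obstacle here: the whole content is the fact that strict $2$-balancedness forces the maximum $2$-density of~$H$ strictly below $\delta(H)$, after which the fractional-part hypothesis pins it down to at most $\delta(H)-1/2$, matching the density threshold of Theorem~\ref{t:anti-directed}. The only points requiring a little care are verifying that $H-v$ has at least three vertices — which is where bipartiteness together with $\delta(H) \ge 2$ is used — and noting that $v$ has degree exactly $\delta$, so that $e(H-v) = m-\delta$.
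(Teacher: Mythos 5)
Your proof is correct and follows essentially the same route as the paper's: use strict $2$-balancedness (applied to $H-v$ for a minimum-degree vertex~$v$) to get $\mm(H) < \delta(H)$, then combine with the fractional-part hypothesis to deduce $\mm(H) \le \delta(H) - 1/2$ and invoke Theorem~\ref{t:anti-directed}. The only difference is that you spell out the cross-multiplication and explicitly justify $v(H)\ge 4$ via bipartiteness, which the paper leaves implicit.
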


\begin{proof}
  We have
  $(e(H-u)-1)/(v(H-u)-2) < (e(H)-1)/(v(H)-2)$
  for all~$u \in V(H)$, since~$H$ is strictly $2$-balanced.
  It follows that~$\mm(H)<\delta(H)$,
  so~$\lfloor m_2(H) \rfloor +1 \le \delta(H)$.
  Since~$m(G) < \mm(H) \le \lfloor m_2(H) \rfloor +1/2 \le \delta(H) -1/2$,
  we can apply Theorem~\ref{t:anti-directed}.
\end{proof}

\subsection{Cycles}
\label{s:cycles}

We now consider orientations of $\ell$-cycles, where~$\ell\ge 4$.
The main results are Theorems~\ref{t:ell-ge-5}~and~\ref{t:ell=4},
which deal with the cases $\ell\ge 5$ and~$\ell=4$,
respectively.
(We also include a simple proof for the case $\ell\ge 8$,
see~Theorem~\ref{t:ell-ge-8}.)

\begin{lemma}\label{l:long-block}
  Let $\Carr$ be an oriented cycle with a long block.
  If $G$ is a graph and $m(G)< \mm(\Carr)$, then~$G\narr \Carr$.
\end{lemma}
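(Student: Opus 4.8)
Let $\Carr$ be an oriented cycle with a long block. If $G$ is a graph and $m(G)<\mm(\Carr)$, then $G\narr\Carr$.

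The plan is to produce an orientation of $G$ in which no directed path has three edges. Since $\Carr$ has a long block — a maximal directed sub-path with at least three edges — every copy of $\Carr$ in an oriented graph contains a directed path with three edges (a directed $P_4$); hence an orientation of $G$ with no such path contains no copy of $\Carr$, and $G\narr\Carr$.

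The first step is to extract the right consequence of the density hypothesis. Because $\Carr$ is an oriented cycle possessing a long block, its length $\ell$ satisfies $\ell\ge 4$ (a triangle admits no directed path of length~$3$), so the underlying graph is $C_\ell$ with
\[
  \mm(\Carr)=\mm(C_\ell)=\frac{\ell-1}{\ell-2}\le\frac{3}{2},
\]
the maximum being attained only at $\ell=4$. Thus $m(G)<\mm(\Carr)\le 3/2$, and Lemma~\ref{l:d-degenerate} gives that $G$ is $2m(G)$-degenerate with $2m(G)<3$; since degeneracy is an integer, $G$ is $2$-degenerate, and therefore $\chi(G)\le 3$.

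The second step is the orientation itself. Fix a proper colouring $c\colon V(G)\to\{1,2,3\}$ and orient every edge of $G$ from its endpoint of smaller colour to its endpoint of larger colour (an edge is never monochromatic, so this is well defined). Along any directed path $u_0\to u_1\to\cdots\to u_m$ the colours strictly increase, so $m\le 2$; the resulting orientation has no directed path with three edges, hence no copy of $\Carr$, so $G\narr\Carr$. (A slightly more hands-on alternative, in the spirit of the proofs of Theorems~\ref{t:tourn-dens-lbound} and~\ref{t:anti-directed}, is to induct on $v(G)$: delete a vertex of degree at most $2$, orient the rest with no directed $P_4$ by induction, and re-insert the vertex; the inductive hypothesis rules out the bad configurations, so one can always orient its at most two edges without creating a directed $P_4$. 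This is essentially the greedy version of the colouring argument.)

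I do not expect a real obstacle here. The only point needing care is the observation that the maximum $2$-density of every oriented cycle with a long block is at most $3/2$, so the hypothesis $m(G)<\mm(\Carr)$ already forces $G$ to be $2$-degenerate; everything after that is the classical fact that a graph of chromatic number $k$ has an orientation whose longest directed path has $k-1$ edges. The borderline case $\ell=4$, where $\mm(\Carr)=3/2$ exactly, is harmless because the inequality $m(G)<\mm(\Carr)$ is strict.
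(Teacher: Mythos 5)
Your proof is correct and is essentially the paper's argument: both observe that a long block forces $v(\Carr)\ge 4$ so $\mm(\Carr)\le \mm(C_4)=3/2$, deduce $2$-degeneracy (hence $\chi(G)\le 3$) from Lemma~\ref{l:d-degenerate}, and orient edges toward the larger colour so no directed $3$-edge path appears.
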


\begin{proof}
  Note that~$v(\Carr)\ge 4$,
  so~$\mm(\Carr) \le \mm(C_4) = 3/2$. By~Lemma~\ref{l:d-degenerate},
  $G$~is \mbox{$2$-degenerate},
  hence~$\chi(G)\le 3$. Fix a proper colouring
  $c\colon V(G)\to \{1,2,3\}$, and orient each edge towards its
  endvertex with the largest colour. This orientation contains no
  long block, so~$G\narr \Carr$.
\end{proof}

While the next result is superseded by Theorem~\ref{t:ell-ge-5},
its proof is much simpler.

\begin{theorem}\label{t:ell-ge-8}
  Let $\Carr$ be an orientation of~$C_\ell$, where $\ell\ge 8$.
  If $G$ is a graph and~$m(G)< \mm(\Carr)$, then~$G\narr \Carr$.
\end{theorem}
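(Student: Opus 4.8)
The plan is to reduce to the block structure supplied by Theorem~\ref{t:npss17_block} and Lemma~\ref{l:component-to-blocks}, and then handle a single $C_\ell$-block directly. Since $C_\ell$ is strictly $2$-balanced and has $\ell\ge 8\ge 3$ edges and is not a matching, and $C_\ell$ is $2$-Ramsey-avoidable by Remark~\ref{r:2-ramsey-avoidable} (as $\ell\ge 4$), it suffices by Lemma~\ref{l:component-to-blocks} to prove $B\narr\Carr$ for every $C_\ell$-block $B$ of~$G$. By Theorem~\ref{t:npss17_block}, when $p\ll n^{-1/\mm(\Carr)}$ every such block satisfies $m(B)<\mm(\Carr)$, so in fact it is enough to prove: if $G$ is \emph{any} graph with $m(G)<\mm(\Carr)$, then $G\narr\Carr$. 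This is exactly the statement of the theorem (the reduction is only needed to connect it back to the random graph, which is carried out later in Section~\ref{s:main-proofs}); here I just need the deterministic claim for $m(G)<\mm(\Carr)$.

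Next I would split on the structure of~$\Carr$. If $\Carr$ has a long block, then Lemma~\ref{l:long-block} applies immediately and we are done. So assume $\Carr$ has no long block, i.e.\ every maximal directed path in~$\Carr$ has at most~$2$ edges. Since $\ell\ge 8$, having all blocks of length $\le 2$ forces $\Carr$ to have many ``sources'' and ``sinks'': going around the cycle, the orientation pattern is a concatenation of blocks of length~$1$ or~$2$, so at least $\lceil \ell/3\rceil\ge 3$ vertices are sources (no inneighbours) and at least $\lceil\ell/3\rceil\ge 3$ are sinks. The key point is that, because $\ell\ge 8$ is large, such a $\Carr$ is ``close to anti-directed'' and in particular every vertex of~$C_\ell$ has in- and out-degree at most~$2$; I would then mimic the degeneracy-plus-greedy argument of Theorem~\ref{t:anti-directed}. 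Concretely: $\mm(\Carr)=\mm(C_\ell)=(\ell-1)/(\ell-2)\le 3/2$, so by Lemma~\ref{l:d-degenerate} $G$ is $2$-degenerate; I process the vertices of~$G$ in a degeneracy order $v_1,\dots,v_n$ (each $v_i$ has at most $2$ neighbours among $v_1,\dots,v_{i-1}$), and orient the back-edges of~$v_i$ so that $v_i$ receives one in-edge and one out-edge (or whatever is available when $v_i$ has fewer than~$2$ back-neighbours). The claim to verify is that this orientation of~$G$ contains no copy of~$\Carr$: in any copy of~$C_\ell$ in~$G$, take the vertex $v_i$ with largest index; its two cycle-edges are both back-edges, hence one points in and one points out, so $v_i$ is neither a source nor a sink of that copy --- but I must ensure that prevents the copy from being a copy of~$\Carr$. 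That requires a little care, since $\Carr$ (having no long block) does have vertices of both in- and out-degree~$1$.

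The cleanest fix is to orient more aggressively: give $v_i$ \emph{both} back-edges pointing \emph{in} (towards $v_i$) when $v_i$ has exactly~$2$ back-neighbours. Then every vertex of~$G$ that is ``locally maximal'' in a copy of~$C_\ell$ is a \emph{sink} of that copy with both incident cycle-edges oriented inward; more is true --- along the degeneracy order, consider in a fixed copy $Q\simeq C_\ell$ the set $S$ of vertices that are sinks in the induced orientation of~$Q$. I would argue every $v\in V(Q)$ with the property that both its $Q$-neighbours precede it in the order is such a sink, and then note that going around~$Q$ these ``local maxima'' cannot be two apart everywhere, forcing a directed path of length~$\ge 3$ somewhere in the induced orientation of~$Q$ --- but $\Carr$ has no long block, contradiction. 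In other words: orienting back-edges inward makes the induced orientation on any $\ell$-cycle $Q$ have a long block (because a $2$-degenerate ordering restricted to~$Q$ always has a ``descent run'' of length $\ge 3$ when $\ell\ge 8$), and since $\Carr$ has no long block, no copy of~$\Carr$ survives. Combined with the long-block case (Lemma~\ref{l:long-block}) this covers all orientations $\Carr$ of~$C_\ell$, $\ell\ge 8$, and the theorem follows.

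The main obstacle is the combinatorial claim in the last paragraph: that a $2$-degenerate vertex ordering, when restricted to the vertices of an $\ell$-cycle $Q$ with $\ell\ge 8$ and used to orient $Q$'s edges ``towards the larger endpoint'', always produces a directed sub-path of length~$\ge 3$ in~$Q$. This is where $\ell\ge 8$ is genuinely used (it fails for small~$\ell$, which is why Theorems~\ref{t:ell-ge-5} and~\ref{t:ell=4} need separate, harder arguments). I expect this to be an easy counting/extremal lemma --- essentially, in a linear order on $\ell$ points arranged cyclically, the number of ``local maxima'' is at most $\lfloor\ell/2\rfloor$, hence some gap between consecutive local maxima along $Q$ has $\ge 2$ internal vertices, which yields a monotone run (in the induced edge-orientation) of $\ge 3$ edges --- but writing it cleanly and pinning down the exact threshold ($\ell\ge 8$ vs.\ $\ell\ge 6$) is the only real work.
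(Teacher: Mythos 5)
The first half of your proposal is fine: the block/component reduction is unnecessary here (the theorem already \emph{is} the deterministic statement about graphs with $m(G)<\mm(\Carr)$; the coupling with $G(n,p)$ happens later), and the long-block case is handled exactly as in the paper via Lemma~\ref{l:long-block}. The problem is the combinatorial claim you flag at the end as ``easy'': it is in fact false. Take a copy $Q=v_1v_2\cdots v_\ell v_1$ of $C_\ell$ with $\ell$ even and assign positions $\pi(v_1)=1,\pi(v_2)=3,\pi(v_3)=2,\pi(v_4)=5,\pi(v_5)=4,\dots,\pi(v_{\ell-1})=\ell-2,\pi(v_\ell)=\ell$ (a zigzag). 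This is a perfectly good $2$-degeneracy order (e.g.\ if $G=Q$), and orienting each edge of~$Q$ towards its later endpoint produces the \emph{anti-directed} $C_\ell$: every block has length~$1$, and in particular no block has length~$\geq 3$. So if $\Carr$ is the anti-directed $C_\ell$ (an orientation of $C_\ell$ with no long block, $\ell$ even, $\ell\geq 8$), your greedy orientation of~$G$ would contain a copy of~$\Carr$. The same fails for odd~$\ell$ as well: one can still make every block have length at most~$2$. The counting heuristic you sketch (``local maxima are at most $\lfloor\ell/2\rfloor$, so some gap is large'') goes the wrong way --- with $\lfloor\ell/2\rfloor$ local maxima every gap can have length~$1$. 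Note also that your argument only uses $2$-degeneracy, i.e.\ $m(G)\le 1$ would suffice, whereas the theorem genuinely needs $m(G)<(\ell-1)/(\ell-2)$, so some use of the finer density bound is unavoidable.

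The paper's proof avoids this by a minimal-counterexample argument: if $G$ is edge-minimal with $m(G)<(\ell-1)/(\ell-2)$ and $G\arr\Carr$, and $\Carr$ has no long block, it first shows the set $W$ of degree-$2$ vertices is independent --- otherwise delete two adjacent degree-$2$ vertices $u,v$, extend an $\Carr$-free orientation of $G\setminus\{u,v\}$ by orienting the three surviving edges at $u,v$ as a directed $3$-path, which forces a long block in every $\ell$-cycle through them. Then it uses both $\sum_v d(v)\le 2m(G)n$ and the independence of $W$ (so $2\abs{W}\le e(G)\le m(G)n$) to derive a contradiction precisely when $\ell\geq 8$. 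That is where the full density bound, and the value~$8$, enter --- not through any ordering argument on a single cycle.
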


\begin{proof}
  Let $\ell=e(\Carr)$.
  If~\Carr\ contains a long block, then the theorem holds by
  Lemma~\ref{l:long-block}, so we assume that the longest block
  of~\Carr\ has length at~most~$2$.

  Suppose, looking for a contradiction, that the statement is false.
  Without loss of generality let~$G$ be a minimal counterexample
  (with respect to the subgraph relation).
  That is, \mbox{$m(G)<\mm(\Carr)$} and~$G\arr \Carr$,
  and $G'\narr \Carr$ for each proper subgraph~$G'\subseteq G$.
  Let $W$ be the set of vertices in~$G$ with degree~$2$.

  If there exists an edge~$uv$ joining vertices $u,v\in W$, then (since
    $G$ is minimal) $uv$~lies in an $\ell$-cycle.
    Moreover,~$G \setminus\{u,v\}\narr \Carr$; so there exists an
    orientation~$\vec G$ of~$G\setminus \{u,v\}$ which avoids~\Carr.
    Note that each $\ell$-cycle in~$G$ is either completely oriented in~$\vec G$
    (while avoiding \Carr), or contains the three (not yet oriented) edges incident
    with either~$u$ or~$v$.
    We extend~$ \vec G$ by orienting these edges so that they form
    a directed path or cycle. Since, by assumption, the length of any block
    of~$\Carr$ is at most two, it follows that~$\vec G$ is an
    orientation of~$G$ avoiding~\Carr, a contradiction.

  Hence no edge of~$G$ lies in~$W$.
  By the minimality of~$G$,
  every vertex $v\in V(G)$ lies an $\ell$-cycle, so
  $\delta(G)\ge 2$. Let~$n \deq v(G)$.
  Since each vertex of~$W$ has degree~$2$,
    \begin{equation*}
        2\abs{W}  + 3(n - \abs{W})
        \le     \sum_{v \in V(G)} d(v)
        =        2e(G)
        \le     2m(G)n
        <       2n\frac{\ell -1}{\ell-2}.
    \end{equation*}
    It follows that~$\abs{W} \ge n(1-2/(\ell-2))$ and
    \begin{equation*}
        2\left(1-\frac{2}{\ell-2}\right)n
        \le    2\abs{W}
        \le    e(G)
        \le    m(G)n
        =       \left(1+\frac{1}{\ell-2}\right)n,
    \end{equation*}
    which is a contradiction for $\ell \ge 8$.
\end{proof}

\subsubsection{Cycles of length at least~5}
\label{s:ell-ge-5}

We now generalise Theorem~\ref{t:ell-ge-8} for
oriented cycles with at least 5~vertices.

\begin{theorem}\label{t:ell-ge-5}
  Let \Carr\ be an
  orientation of~$C_\ell$, where~$\ell \ge 5$.
  If $G$ is a graph and~$m(G) < \mm(C_\ell)$, then~$G\narr \Carr$.
\end{theorem}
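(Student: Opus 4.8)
The plan is to follow the blueprint already set up in the paper: reduce the statement to a statement about $H$-blocks, and then analyse those blocks directly using the structural bound $m(F) < \mm(C_\ell)$ available from Theorem~\ref{t:npss17_block} (which applies since $C_\ell$ is strictly $2$-balanced with at least $3$ edges and is not a matching). Combining Remark~\ref{r:2-ramsey-avoidable} (orientations of $C_\ell$ with $\ell\ge 4$ are $2$-Ramsey-avoidable) with Lemma~\ref{l:component-to-blocks}, it suffices to show that $B \narr \Carr$ for every $H$-block $B$, and since every such $B$ satisfies $m(B) < \mm(C_\ell)$, the theorem would follow from the (stronger-looking) assertion: \emph{if $G$ is any graph with $m(G) < \mm(C_\ell) = \frac{\ell-1}{\ell-2}$ and $\Carr$ is an orientation of $C_\ell$ with $\ell\ge5$, then $G\narr\Carr$}. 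This is exactly the statement as phrased, so in fact I would aim to prove it directly, by the same minimal-counterexample strategy used for $\ell\ge 8$ in Theorem~\ref{t:ell-ge-8}, but refined to squeeze out the full range $\ell \ge 5$.

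First I would dispose of the easy case: if $\Carr$ has a long block, Lemma~\ref{l:long-block} gives $G\narr\Carr$ immediately (using that a $2$-degenerate graph is $3$-colourable and orienting toward the larger colour kills all long blocks). So assume every block of $\Carr$ has length at most $2$. Next, take $G$ a minimal counterexample with respect to the subgraph relation, so $G\arr\Carr$, $m(G)<\frac{\ell-1}{\ell-2}$, and every proper subgraph avoids $\Carr$. As in Theorem~\ref{t:ell-ge-8}, minimality forces every vertex and edge of $G$ to lie in an $\ell$-cycle, hence $\delta(G)\ge 2$; and the argument that there is no edge between two degree-$2$ vertices $u,v$ carries over verbatim (delete $u,v$, orient the rest avoiding $\Carr$, then complete the three remaining edges incident to $u$ or to $v$ as a short directed path, which cannot create a block of length $\ge 3$). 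So the set $W$ of degree-$2$ vertices is independent. The counting in Theorem~\ref{t:ell-ge-8} then yields $|W|\ge n\bigl(1-\tfrac{2}{\ell-2}\bigr)$ and $2\bigl(1-\tfrac{2}{\ell-2}\bigr)n \le e(G) \le \bigl(1+\tfrac{1}{\ell-2}\bigr)n$, a contradiction only for $\ell\ge 8$.

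The main obstacle is therefore the range $\ell\in\{5,6,7\}$, where this crude double-counting is not enough and I expect the bulk of the work to go. Here I would push the structural analysis further: since $W$ is independent and every vertex of $W$ has both neighbours outside $W$, and since every edge lies in an $\ell$-cycle, one can study the graph $G$ more carefully — e.g. look at the graph $G'$ obtained by suppressing degree-$2$ vertices (replacing a $2$-path $a$–$w$–$b$ by an edge $ab$ or a label), controlling multiplicities, and exploiting that the few vertices of degree $\ge 3$ are sparse (their number is at most $\tfrac{2}{\ell-2}n$ up to constants by the counting above, so $G$ is "mostly subdivided"). The key local move is: given an $\Carr$-free orientation of $G$ minus one low-degree vertex or minus a short suppressible path, re-extend it; when $\Carr$ has only blocks of length $\le 2$, a directed $\ell$-cycle can only be completed if the already-oriented part is "almost" a directed path, and one has enough freedom at a degree-$2$ vertex (two incident edges, which can be oriented in or out independently) or at a suppressed path to avoid this. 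One likely needs a short case analysis on $\ell\in\{5,6,7\}$ and on the block structure of $\Carr$ (how the blocks of length $1$ and $2$ alternate around the cycle); the cleanest formulation is probably to prove that $G$ admits a proper-colouring-type orientation, or an orientation obtained by a small perturbation of one, in which no orientation of $C_\ell$ arising as $\Carr$ can embed — relying throughout on $2$-degeneracy of $G$ and on the independence of $W$.

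Finally, having proved the displayed statement for all $\ell\ge 5$, I would remark that it immediately gives the $0$-statement of Theorem~\ref{t:thresholds} for $t=\ell\ge 5$ via Theorem~\ref{t:npss17_block}, Remark~\ref{r:2-ramsey-avoidable}, and Lemma~\ref{l:component-to-blocks}; the remaining cycle case $\ell=4$ is handled separately in Theorem~\ref{t:ell=4}. I expect no new ideas beyond Theorem~\ref{t:ell-ge-8} are needed for the structural skeleton, and that all the genuine difficulty is concentrated in a finite but somewhat delicate case check for $\ell\in\{5,6,7\}$.
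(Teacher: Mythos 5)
The easy reduction agrees with the paper: if $\Carr$ has a long block, Lemma~\ref{l:long-block} disposes of it, so one may assume every block of $\Carr$ has length at most $2$. But from there on your plan diverges, and the core of it has a genuine gap. You propose to push the minimal-counterexample plus double-counting argument of Theorem~\ref{t:ell-ge-8} down to $\ell\in\{5,6,7\}$, and you explicitly acknowledge that you do not know how to close this range --- the ``push the structural analysis further'' paragraph (suppressing degree-$2$ vertices, small perturbations of a $3$-colouring orientation, a ``short case analysis'') is a hope, not a proof. You also flip a sign in describing the local move for an edge between two degree-$2$ vertices: the three edges incident with $u$ or $v$ are oriented to form a \emph{directed $3$-path}, i.e.\ deliberately a long block, which kills $\Carr$ because $\Carr$ has no long block; your phrasing ``short directed path, which cannot create a block of length $\ge 3$'' has the mechanism backwards.

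The actual proof in the paper does not refine Theorem~\ref{t:ell-ge-8} at all. It replaces the minimal-counterexample strategy with the construction-sequence machinery of Barros, Cavalar, Mota and Parczyk (Proposition~\ref{prop_config}): every $C_\ell$-component is built by successively gluing $\ell$-cycles along a single edge or a $2$-path, and the types of the gluing steps are classified as $\Aplain{j}$ or $\Bplain{k}$. The density bound $m(G)<\mm(C_\ell)$ then forces (Proposition~\ref{p:config_bounds}) very strong restrictions on which types can co-occur, and the argument proceeds by a direct case analysis (Cases $0$ through $4$ of the paper's proof) in which an explicit $\Carr$-free orientation is constructed step by step along the construction sequence, making sure each newly created $\ell$-cycle receives a long block. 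Two of the hard sub-cases are further reorganised by editing the construction sequence so as to fall into a previously treated case. None of this is present in your sketch, and I see no evidence that a refinement of the degree-$2$ counting of Theorem~\ref{t:ell-ge-8} suffices without something like this construction-sequence classification; your assertion that ``no new ideas beyond Theorem~\ref{t:ell-ge-8} are needed for the structural skeleton'' is where the plan breaks down.
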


Barros, Cavalar, Mota and Parczyk~\cite{BPCOMP20:antiramsey} obtained
a detailed characterisation of the construction sequences of
$C_\ell$-components; we state their result below in a slightly
modified form (the original has `$\ell\ge 5$' in place
of~`$\ell\ge 4$', but the same proof holds).

\begin{proposition}[{\cite[Proposition 7]{BPCOMP20:antiramsey}}]\label{prop_config}
	Let $\ell\ge 4$ be an integer, $G$ be a graph with $m(G)<\mm(C_\ell)$ and $(H_1,\ldots, H_t)$ be a $C_\ell$-component of $G$.
	The following holds for every $1 \le i \le t-1$.
	If~$C$ is an~$\ell$-cycle added to~$H_{i}$ to form~$H_{i+1}$,
	then there exists a labelling $C=u_1u_2\cdots u_\ell u_1$ such that
	exactly one of the following occurs, where $2 \le j \le \ell$ and $3 \le k \le \ell-1$.

	\begin{enumerate}
        \item[$(A_j)$]\label{config_A}
          $u_1u_2\cdots u_j$ is a~$j$-path in~$H_{i}$ and $u_{j+1},\dots,u_\ell \notin V(H_i)$;

        \item[$(B_k)$] \label{config_B}
          $u_1u_2 \in E(H_i)$, $u_2u_3 \notin E(H_i)$, $\{u_3,\dots, u_\ell\} \setminus \{u_k\} \subseteq V(H_{i+1})\setminus V(H_i)$, $u_k \in V(H_i)$.
	\end{enumerate}
\end{proposition}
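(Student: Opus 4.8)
The plan is to run an induction on~$i$ along the construction sequence, carrying a density-type monovariant on the graphs~$H_i$. For a subgraph $S\subseteq G$ with $v(S)\ge 1$ set $\phi(S)\deq(\ell-1)v(S)-(\ell-2)e(S)$. Since $\mm(C_\ell)=(\ell-1)/(\ell-2)$, the hypothesis $m(G)<\mm(C_\ell)$ says exactly that $\phi(S)>0$ for every non-empty $S\subseteq G$; as $\phi(S)$ is an integer, this yields the inequality I will use repeatedly:
\[
  \phi(S)\ge 1\qquad\text{for every non-empty }S\subseteq G .
\]
Writing $H_{i+1}=H_i\cup C$ for the step under consideration, with $C$ an $\ell$-cycle and $D\deq C\cap H_i$, a direct count (using $v(C)=e(C)=\ell$) gives
\[
  \phi(H_{i+1})=\phi(H_i)+\ell-\phi(D),\qquad
  \phi(D)=v(D)+(\ell-2)r,
\]
where $r$ is the number of components of~$D$; here $D$ is a proper subgraph of the cycle~$C$ (as $C\not\subseteq H_i$) with $E(D)=E(C)\cap E(H_i)\neq\emptyset$, so $D$ is a disjoint union of $r\ge 1$ paths and isolated vertices, at least one of them a non-trivial path, whence $e(D)=v(D)-r$ and $v(D)\ge r+1$.

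The invariant I would maintain is $\phi(H_i)\le\ell$. It holds at the start since $\phi(H_1)=\phi(C_\ell)=\ell$, and it is preserved by any step of type~$(A_j)$ or~$(B_k)$: in case~$(A_j)$ the overlap~$D$ is a path on $j\ge 2$ vertices, so $\phi(D)=j+(\ell-2)\ge\ell$; in case~$(B_k)$ it is an edge together with one isolated vertex, so $\phi(D)=3+2(\ell-2)\ge\ell$; in both cases $\phi(H_{i+1})=\phi(H_i)+\ell-\phi(D)\le\phi(H_i)$. Thus, assuming inductively that all earlier steps are of type~$(A_j)$ or~$(B_k)$, we get $\phi(H_i)\le\ell$, and combining this with $\phi(H_{i+1})\ge 1$ yields $\phi(D)=\phi(H_i)+\ell-\phi(H_{i+1})\le 2\ell-1$.

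It then remains to read the structure of~$D$ off the inequality $v(D)+(\ell-2)r\le 2\ell-1$. If $r\ge 3$ then $v(D)\ge r+1$ gives $\phi(D)\ge r(\ell-1)+1\ge 3\ell-2>2\ell-1$, a contradiction. If $r=2$, the inequality forces $v(D)\le 3$, hence $e(D)=v(D)-2\le 1$; since $e(D)\ge 1$ we get $v(D)=3$ and $D$ is a single edge plus an isolated vertex, so, labelling $C=u_1u_2\cdots u_\ell u_1$ with the edge equal to $u_1u_2$ and the isolated old vertex equal to~$u_k$, we are in configuration~$(B_k)$: the condition $u_2u_3\notin E(H_i)$ is automatic because $E(D)=\{u_1u_2\}$, and by choosing the direction of traversal we may arrange $3\le k\le\ell-1$ (if one labelling puts the isolated vertex at~$u_\ell$, the reverse labelling puts it at~$u_3$, which is legitimate since $\ell\ge4$). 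Finally, if $r=1$ then $D$ is a single path, and labelling $C=u_1u_2\cdots u_\ell u_1$ so that $D=u_1u_2\cdots u_j$ gives configuration~$(A_j)$ with $2\le j=v(D)\le\ell$ and $u_{j+1},\dots,u_\ell\notin V(H_i)$, because $V(D)=V(C)\cap V(H_i)$. Since $(A_j)$ leaves no old vertex off the path while $(B_k)$ has one, the two are mutually exclusive, so exactly one occurs; this closes the induction. (Note $\ell\ge4$ is used only to make the range $3\le k\le\ell-1$ nonempty, consistently with the triangle case being treated separately.)

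The hard part, to my mind, is realising that a purely local argument will not do. Bounding the density of $H_{i+1}$ alone, or of $H_{i+1}$ together with a shortest $H_i$-path joining two components of~$D$, is not enough: for example, when $\ell=4$ an overlap consisting of two disjoint edges is compatible with every such local density estimate, and is excluded only by the \emph{global} fact $\phi(H_i)\le\ell$. So the crux is to isolate this monovariant and to verify that the two admissible attachment types~$(A_j)$ and~$(B_k)$ are precisely the $\phi$-non-increasing ones, which is what lets the induction go through. The rest — the component/relabelling bookkeeping above, and the remark that $H_i$ is connected (so one never needs the component structure of~$\calc_{C_\ell}(G)$ explicitly) — is routine.
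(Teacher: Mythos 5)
Your argument is correct, but note that there is no in-paper proof to compare it with: the present paper simply quotes this statement from~\cite{BPCOMP20:antiramsey} (remarking only that the original proof, stated there for $\ell\ge 5$, also covers $\ell\ge 4$). Checking your proposal on its own terms: the identity $\phi(H_{i+1})=\phi(H_i)+\ell-\phi(D)$ with $D=C\cap H_i$, the formula $\phi(D)=v(D)+(\ell-2)r$ for a disjoint union of $r$ paths with at least one edge, the invariant $\phi(H_i)\le\ell$ (preserved since both admissible attachments have $\phi(D)\ge\ell$), and the consequent bound $\phi(D)\le 2\ell-1$ do exactly what you claim: $r\ge3$ is impossible, $r=2$ forces a single edge plus one isolated old vertex (configuration $(B_k)$, with the relabelling remark correctly using $\ell\ge4$ to place $k$ in $\{3,\dots,\ell-1\}$, and $u_2u_3\notin E(H_i)$ automatic), and $r=1$ forces a single subpath of $C$ (configuration $(A_j)$); mutual exclusivity follows since the component structure of $D$ is labelling-independent. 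This global density bookkeeping along the construction sequence is the same kind of argument as in the cited source, so while I cannot certify it reproduces their proof verbatim, it is a valid, self-contained proof in the expected spirit, and your observation that a purely local density estimate would not exclude, e.g., an overlap of two disjoint edges when $\ell=4$ correctly identifies why the invariant $\phi(H_i)\le\ell$ is the essential point.
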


If $(H_1,\ldots,H_t)$ constructs a
$C_\ell$-component, then
for each $i\in[t-1]$ the new edges
in~$H_{i+1}$ form a path (by~Proposition~\ref{prop_config}).
We denote this path by~\defi{$Q_i$}, write
\defi{$x_i,z_i$} for its endvertices and \defi{$y_i$}~for the sole internal
vertex of~$Q_i$ in~$H_i$, if it exists. (Again by~Proposition~\ref{prop_config},
$V(Q_i)\cap V(H_i)$ is either
$\{x _i,z_i\}$ or~$\{x_i,y_i,z_i\}$.) We write \defi{$\type(i)$} to
denote the operation (\A{j}~or~\B{k}, where~$2 \le j \le \ell$
and~$3\le k \le \ell -1$) which constructs~$H_{i+1}$ from~$H_i$.

\begin{proposition}[\cite{BPCOMP20:antiramsey}]\label{p:config_bounds}
  Let~$\ell\ge 5$. If $G=(H_1,\ldots,H_t)$ is
  a~$C_\ell$-component and $m(G)<\mm(C_\ell)$, then for all distinct $i,j\in [t-1]$
  and each~$k\in\{3,\ldots,\ell -1\}$ we have the following.
  \begin{itemize}
  \item If $\type(i)= \A{\ell}$, then every other step is of~type
    \A{2}~or~\A{3}.

  \item If $\type(i)=\A{\ell -1}$, then every other
    step is of~type \A{2},~\A{3} or~\Aellminus.

  \item If $\type(i)=\type(j)=\A{\ell -1}$, then $\ell=5$ and every
    other step is of~type~\A{2}.

  \item If $\type(i)=\B{k}$, then every other step is of~type~\A{2}.
  \end{itemize}

\end{proposition}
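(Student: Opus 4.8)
The plan is to attach to each subgraph $J\subseteq G$ the integer potential
$w(J)\deq(\ell-1)v(J)-(\ell-2)e(J)$ and to track it along the construction
sequence $(H_1,\dots,H_t)$. Because $m(G)<\mm(C_\ell)=(\ell-1)/(\ell-2)$, every
$J\subseteq G$ with $v(J)\ge 1$ has $(\ell-2)e(J)<(\ell-1)v(J)$, and since both sides
are integers this forces $w(J)\ge 1$; also $w(H_1)=w(C_\ell)=\ell$.

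Next I would read off from Proposition~\ref{prop_config} how one step changes $w$.
An $\A{j}$-step creates exactly $\ell-j$ new vertices and $\ell-j+1$ new edges in
$H_{i+1}$ (when $j=\ell$ the only edge of the new cycle that might already lie in $H_i$
is $u_\ell u_1$, and it must be new since that cycle is not contained in $H_i$), hence
$w(H_{i+1})=w(H_i)-(j-2)$; a $\B{k}$-step creates $\ell-3$ new vertices and $\ell-1$ new
edges, hence $w(H_{i+1})=w(H_i)-(\ell-1)$. In particular $w$ is non-increasing along the
sequence, so from $w(H_1)\ge w(H_t)=w(G)\ge 1$ we get
\[
  \sum_{i=1}^{t-1}\bigl(w(H_i)-w(H_{i+1})\bigr)=\ell-w(G)\le\ell-1.
\]
Call $j-2$ the \emph{cost} of an $\A{j}$-step (so an $\A{2}$-step is free and an
$\A{\ell}$-step costs $\ell-2$) and $\ell-1$ the cost of a $\B{k}$-step; then the total
cost over all $t-1$ steps is at most $\ell-1$.

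The four bullets now follow by comparing costs with this budget. An $\A{\ell}$-step
costs $\ell-2$, leaving at most $1$, so no other step costs $2$ or more, i.e.\ every
other step has type $\A{2}$ or $\A{3}$. Two $\A{\ell-1}$-steps cost $2(\ell-3)$, which
must be $\le\ell-1$, forcing $\ell=5$; the budget is then spent, so every other step has
type $\A{2}$. A $\B{k}$-step costs $\ell-1$ and likewise spends the budget, so every
other step has type $\A{2}$. Finally, a single $\A{\ell-1}$-step costs $\ell-3$, leaving
at most $2$, so every other step has type $\A{2}$, $\A{3}$, or $\A{4}$; when $\ell=5$
this is exactly $\A{2}$, $\A{3}$, or $\A{\ell-1}$, as required.

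The main obstacle is the fourth bullet for $\ell\ge 6$: the potential count still
permits, besides the $\A{\ell-1}$-step, one extra $\A{4}$-step, which the proposition
forbids. This cannot be settled by density alone — such a $G$ realises the minimum
$w(G)=1$, and deleting any single vertex only increases the potential — so one must use
the finer, geometric content of Proposition~\ref{prop_config}: analyse where the
attaching $4$-path of the $\A{4}$-step sits relative to the $(\ell-1)$-path of the
$\A{\ell-1}$-step and the $\A{2}$-steps separating them, and show that any such layout
either exhibits a subgraph of density at least $\mm(C_\ell)$ or violates the structural
constraints that Proposition~\ref{prop_config} places on the construction sequence. I
expect this to be the only genuinely non-routine part of the argument.
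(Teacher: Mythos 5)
Your potential argument is correct as far as it goes: $w(J)\coloneqq(\ell-1)v(J)-(\ell-2)e(J)$ is a positive integer for every $J\subseteq G$, $w(H_1)=\ell$, an $A_j$-step decreases $w$ by $j-2$, and a $B_k$-step by $\ell-1$, giving the budget $\le\ell-1$. This cleanly delivers the first, third and fourth bullets, and delivers the second bullet exactly when $\ell=5$ (where $A_4=A_{\ell-1}$). Note that this proposition is taken from \cite{BPCOMP20:antiramsey} without proof, so there is no argument in the present paper to compare against.

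The caveat you flag for $\ell\ge 6$, however, is not a gap in your argument that a finer structural analysis will close: the extra $A_4$-step that your budget allows really does occur. Take $\ell=6$. Start with the $6$-cycle $H_1=v_1v_2v_3v_4v_5v_6v_1$. Add a vertex $w$ and edges $v_1w,wv_5$ (creating the $6$-cycle $v_1v_2v_3v_4v_5wv_1$); this is an $A_5$-step. Now add vertices $a,b$ and edges $wa,ab,bv_1$, creating the $6$-cycle $v_1v_6v_5wabv_1$, whose first four vertices $v_1,v_6,v_5,w$ form a $4$-path in $H_2$; this is an $A_4$-step. The resulting graph $G$ has $9$ vertices and $11$ edges, exactly three copies of $C_6$ (so it is a $C_6$-component), and $m(G)=11/9<5/4=\mm(C_6)$: here $w(G)=1$, saturating your bound with cost $3+2=5=\ell-1$. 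So the construction sequence $(H_1,H_2,H_3)$ has $\type(1)=A_{\ell-1}$ and $\type(2)=A_4$, which the second bullet as printed forbids. It therefore seems that the restatement in this paper is slightly too strong for $\ell\ge 6$, and the conclusion your argument actually gives (every other step is $A_2$, $A_3$ or $A_4$) is the one that holds. This discrepancy is harmless downstream: in Case~1 of the proof of Theorem~\ref{t:ell-ge-5} the only facts used about steps $j\neq i$ are that $\length(Q_j)\ge 3$ and that $V(Q_j)\cap V(H_j)=\{x_j,z_j\}$, and both hold for $A_4$-steps when $\ell\ge 6$. So your approach is essentially complete; the ``non-routine part'' you were worried about does not need to be carried out, because the stronger claim is not true and is also not needed.
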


We also use the following results.

\begin{remark}\label{r:cycle-module}
  Let $G$ be a $C_5$-component. If $G$ can be constructed solely by
  steps of type $(A_2)$, then every cycle in~$G$ has length congruent
  to~$2\pmod 3$.
\end{remark}

\begin{proof}
  The proof is by induction on~$i\in[t]$ where
  $(H_1,\ldots,H_t)$ is the construction sequence of~$G$.
  The base holds because $H_1$~is a~$5$-cycle.
  Now suppose every cycle in $H_i$ has length congruent
  to $2\pmod 3$, where $i\ge 1$. We form~$H_{i+1}$ by a step of
  type~$(A_2)$, i.e., by adding an $5$-path~$P$ joining the endvertices
  of an edge~$uv$ of~$H_i$. Any new cycle~$C$ is formed by an
  $uv$-path~$P'$ in~$H_i$, together with~$P$. If~$P'=uv$,
  then~$C$ has length $5$, and the claim holds. On the other
  hand, if~$uv\notin E(P')$, then~$C=P'\cup P$, but
  since~$C'\deq P'+uv$ is a cycle in~$H_i$, it follows
  that~$e(C')\equiv 2\pmod 3$, so $e(C)=e(P') + e(P) = e(C') -1 + e(P) \equiv
    2\pmod 3$. \qedhere
  \end{proof}

\begin{remark}\label{r:before-cherries}
  Let~$G$ be a $C_5$-component. If~$G$ is
  constructed solely by steps of the types \A{2}~and~\A{3}, then
  $G$~contains no $C_3$~and no~$C_4$.
\end{remark}

\begin{proof}
  Let~$(H_1,\ldots,H_t)$ be a construction sequence
  of~$G$. Note that $C_3\nsubseteq G$: indeed,
  $H_1\simeq C_5$, so~$C_3\nsubseteq H_1$;
  moreover, for each $i\in[t-1]$ we have~$H_{i+1}=H_i\cup Q_i$ and $Q_i$ is a~path
  of length at~least~$3$ which is internally
  disjoint from~$H_i$, so~$C_3 \nsubseteq H_{i+1}$.

  Similarly~$C_4\nsubseteq H_1$, and
  if~$H_i\cup Q_i$ contains a $C_4$,
  then $\type(i)=\A{3}$, so~$x_i$~and~$z_i$ are connected by
  a path $x_iwz_i$ in~$H_i$ (which, together with~$Q_i$,
  creates a~$C_5$). But then $x_iwz_ix_i$ is a~$C_3$
  in~$G$, a contradiction.
\end{proof}

We are now in position to prove the main result of this section.

\begin{proof}[{Proof of Theorem~\ref{t:ell-ge-5}}]
  Let~$G$ be a graph with~$m(G) < (\ell-1)/(\ell-2)$, where
  $\ell \ge 5$, and let $\Carr$ be an oriented $\ell$-cycle.
  By~Lemma~\ref{l:long-block}, if \Carr\ contains a long block,
  then $G\narr \Carr$, so we may
  assume that every block of~\Carr\ has length at most two.
  We will show that the $C_\ell$-components of~$G$
  admit an orientation in which every $\ell$-cycle has a long block.
  It suffices to consider one such component~$F$, as
  $C_\ell$-components can be independently oriented (they
  do not share edges) and remaining edges can be arbitrarily
  oriented (each $\ell$-cycle in~$G$ lies in some $C_\ell$-component).

  Let~$F=(H_1,\dots,H_t)$ be a~$C_\ell$-component of~$G$.
  Hence, for all~$i\in[t-1]$,
  each $\ell$-cycle $C\subseteq H_{i+1}$ which did not exist in~$H_i$
  contains either the path $x_iQ_iy_i$ or~$y_iQ_iz_i$
  (if~$Q_i$ intersects~$H_i$ in three vertices)
  or the whole path~$Q_i$.

  \case{0} For each $i\in[t-1]$ we have
  $\type(i)\notin\{\A{\ell-1},\A{\ell},\B{3},\ldots,\B{\ell
    -1}\}$.

  For each~$i\in[t-1]$, every new cycle in~$H_{i+1}$ contains~$Q_i$
  and~$\length(Q_i)\ge 3$. We construct an orientation of~$F$ which
  avoids~\Carr\ as follows. Fix a~directed orientation of~$H_1$, and
  for each~$i\in[t-1]$ fix a~directed orientation of~$Q_i$.
  Clearly~$H_1$ does not contain~\Carr, and for each~$i\in[t-1]$ every new $\ell$-cycle
  in~$H_{i+1}$ contains a long block (since~$Q_i$ is directed),
  so~$F\narr\Carr$.

  \case{1} There is precisely one index $i\in[t-1]$ such that~$\type(i) = \Aellminus$.

  Let~$Q_i=x_ivz_i$ and let~$C$ be an
  $\ell$-cycle in~$H_i$ containing~$z_i$. We may
  assume that~$H_1= C$. Note that ~$\length(Q_j)\ge 3$
  for each $j\in[t-1]\setminus\{i\}$ since
  (by~Proposition~\ref{p:config_bounds}) $\type(j)\in\{\A{2},\A{3}\}$.
  We orient~$F$ as follows.

  Firstly, orient~$H_1$ so that $z_i$~is the origin of a long block,
  and so that $z_i$~has no~inneighbours in~$H_1$.
  Secondly, for each $j\in[i-1]$,
  orient~$Q_j$ forming a directed path, while ensuring that $z_i$~has no
  inneighbours in~$H_{j+1}$. (This is possible since,
  if $Q_j$ contains~$z_i$, then $z_i$~is an~endvertex of~$Q_j$.)
  Orient $Q_i$ as a directed path from~$x_i$ to~$z_i$.
  Finally, for each~$j\in[t-1]\setminus[i]$ orient~$Q_j$ so as to form
  a~directed path.

  Clearly, the orientation of~$H_1$ avoids~\Carr. Since
  $\length(Q_j)\geq 3$ for each $j\in[t-1]\setminus\{i\}$,
  each new $\ell$-cycle in~$H_{j+1}$ has a long block (as it contains~$Q_j$).
  Finally, every new cycle~$C$ in~$H_{i+1}$ must contain $Q_i$ as well as
  some edge $z_iz\in E(H_i)$. As $z_i$ has no inneighbours in~$H_i$, the
  edge $z_iz$ extends the directed path $x_i\arr v\arr z_i$,
  forming a long block in~$C$.
  This shows that every $\ell$-cycle has a long block, so~$F\narr\Carr$.

  \case{2} There exists $i\in[t-1]$ such that~$\type(i)=\A{\ell}$.

  Let~$\alpha \in[t-1]$. By Proposition~\ref{p:config_bounds}, if
  $\alpha\neq i$, then~$\type(\alpha)\in\{\A{2},\A{3}\}$, so
  $\length(Q_\alpha)\ge 3$. We may assume that~$H_1$ is an
  $\ell$-cycle in~$H_i$ containing $z_i$.
  We orient the edges of~$F$ as follows.
  Let~$N$ be the set of neighbours of~$z_i$ in~$H_i$.

  \afigure{fig-case2}{Orientations in~Case~2. Left: orientation of~$H_1$;
    note~$H_1$ has a long block starting from~$z_i$ (since $\ell\geq 5$).
    Centre and right: orientations of~$Q_\alpha$ (where~$\alpha\neq i$);
    in the figure, $a\notin \{z_i\}\cup N$, $r\in N$ and~$q\in \{z_i\}\cup N$,
    where~$N\deq N_{H_i}(z_i)$.}{fig:case1}

  First orient~$H_1$ with two blocks, each with length at least~$2$
  and origin~$z_i$ (see Figure~\ref{fig:case1}).
  Next, for each $j\in[i-1]$, we do the following.
  \textbf{If no endvertex of~$Q_j$ lies in~$\{z_i\}\cup N$},
    fix an arbitrary directed orientation of~$Q_j$.
    \textbf{If a single endvertex~$q$ of~$Q_j$ lies in~$\{z_i\}\cup N$}, then
    orient~$Q_j$ to form a directed path with origin~$q$.
  \textbf{If both endvertices $q,r$ of~$Q_j$ lie in~$\{z_i\}\cup N$}, where we
  assume~$r\neq z_i$, then orient~$Q_j$ so that it has precisely two blocks,
  starting from $q$~and~$r$,
  and so that the latter has precisely one~arc.
  Finally, orient~$x_i\arr z_i$, and for
  each $j\in[t-1]\setminus[i]$ fix a directed orientation of~$Q_j$
  (see Figure~\ref{fig:case1}).

  Let us check that every $\ell$-cycle in~$F$ has a long block.
  This is clearly true in~$H_1$.
  Now suppose~$\alpha\in[t-1]\setminus \{i\}$.
  Note that each new cycle in~$H_{\alpha+1}$
  contains~$Q_\alpha$ and that~$\length(Q_\alpha)\geq 3$ since
  $\type(\alpha)\in\{\A{2},\A{3}\}$.
  Moreover, $Q_\alpha$ has a block of length at least~$\length(Q_\alpha)-1$
  if $\alpha < i$,
  and a block of length at~least~$\length(Q_\alpha)$ if~$\alpha > i$.
  Hence, if $\length(Q_\alpha)\ge 4$ or if~$\alpha > i$,
  then $Q_\alpha$~has a long block.
  So we may suppose that $\ell=5$, $\length(Q_\alpha) = 3$
  and~$\alpha\in[i-1]$.
  Hence~$\type(\alpha)=\A{3}$ and there is precisely one new
  $5$-cycle~$C$ in~$H_{\alpha +1}$ (as otherwise
  two $3$-paths joining $x_\alpha$~and~$z_\alpha$, would form a
  $4$-cycle in~$H_\alpha$, contradicting
  Remark~\ref{r:before-cherries}).
  If~$\bigl|\{x_\alpha,z_\alpha\}\cap (\{z_i\}\cup N)\bigr|\le 1$, then
  $C$~has a long block containing~$Q_\alpha$.
  Otherwise,~$\{x_\alpha,z_\alpha\}\subseteq \{z_i\}\cup N$.
  Note that $x_\alpha z
  z_\alpha\subseteq H_\alpha$ for some~$z\in V(H_\alpha)$ since~$C\subseteq H_{\alpha +1}$;
  if~$z_i\in\{x_\alpha,z_\alpha\}$, then $x_\alpha z_\alpha \in E(H_i)$,
  so $x_\alpha z_\alpha z x_\alpha$ is a triangle in~$H_i$,
  contradicting Remark~\ref{r:before-cherries}.
  Therefore $z_i\notin\{x_\alpha,z_\alpha\}$,
  so~$C= Q_\alpha\cup x_\alpha z_iz_\alpha$
  (since~$z\neq z_i$ implies $x_\alpha z z_\alpha z_i x_\alpha$
  is a $4$-cycle in~$H_i$,
  which contradicts Remark~\ref{r:before-cherries}).
  Since $Q_\alpha$ has a directed $3$-path from either
  $x_\alpha$~or~$z_\alpha$ to a
  vertex~$w\in V(Q_\alpha)\setminus V(H_\alpha)$, and both
  $x_\alpha$~and~$z_\alpha$ are outneighbours of~$z_i$,
  it follows that~$C$ has a long block.

  To conclude Case~2, we consider the new $\ell$-cycles
  in~$H_{i+1}$. Each of these cycles contains the arc~$x_i\to z_i$, so it suffices to show that
  every $3$-path $z_izw$ in $H_{i}$ is directed from $z_i$~to~$w$.
  Note that for each~$j\in[i-1]$ and
  each pair of distinct new edges $e_1,e_2$ in~$H_{j+1}$,
  there exist distinct new vertices $v_1\in e_1,v_2\in e_2$
  in~$H_{j+1}$.
  It follows that either
  $z_izw\subseteq H_1$; or
  $zw\subseteq Q_\alpha$ and $z$ is an endvertex of~$Q_\alpha$ for some~$\alpha\in[i-1]$; or
  $z_izw\subseteq Q_\beta$ and $z_i$ is an endvertex of~$Q_\beta$ for some~$\beta\in[i-1]$.
  In each of these cases $z_izw$ has the required orientation.

  Since every $\ell$-cycle of $F$ is a long block and~$F\narr\Carr$.

  \case{3} There exist $i,j\in[t-1]$ such
  that~$\type(i)=\type(j)=\Aellminus$.

  \afigure{fig-case3-4paths}{Unions of distinct $4$-paths
    with common endvertices.}{fig:4-path-union}

  By~Proposition~\ref{p:config_bounds} we have~$\ell=5$ and
  $\type(\alpha)=\A{2}$ for each~$\alpha\in[t-1]\setminus\{i,j\}$.
  We may suppose~$i < j$.
  Let $P=x_iu_2u_3z_i \subseteq H_i$ and~$Q=x_jv_2v_3z_j\subseteq H_j$, and let
  $Q_i=x_iu_5z_i$~and $Q_j=x_jv_5z_j$. By Remark~\ref{r:cycle-module}, every
  cycle in~$H_i$ has length congruent to~$2$~modulo~$3$, so $H_i$
  contains no~$C_3$, no~$C_4$ and no~$C_6$. In particular, since the
  union of internally disjoint $4$-paths with common ends
  contains~$C_3,C_4$ or~$C_6$
  cycle of length~$3,4$~or~$6$ (see Figure~\ref{fig:4-path-union}), we
  conclude that $P$ is the unique $4$-path between $x_i$~and~$z_i$
  in~$H_i$, and hence the unique such path in~$H_{i+1}$.
  The argument splits into three cases according to how the $5$-cycles
  in~$H_i$ intersect~$P$.

  \subcase{a} There exists a $5$-cycle~$C$ in~$H_i$ containing~$P$.

  We
  may assume~$H_1= C$ and~$i=1$. Let~$C=x_iu_2u_3z_ixx_i$
  (so $H_2=H_{i+1}=C\cup x_iu_5z_i$).
  We first prove that
    \begin{equation}\label{e:case-3-forb-cycles}
    \text{$H_j$ contains  no~$C_3$, no~$C_6$, and precisely one $C_4$}.
  \end{equation}
    Crucially, note that a
    step of type~\A{2} cannot create a~$C_3$ or a~$C_4$. Therefore,
    since~$H_1\simeq C_5$, each~$C_3$ and each~$C_4$ in~$H_j$ were
    created in the $i$-th step resulting in~$H_{i+1}$.
    Since~$H_{i+1}$ is the union of~$C$ and~$z_iu_5x_i$, we conclude
    that~$C_3\nsubseteq H_{i+1}$, so~$C_3\nsubseteq H_j$; moreover,
    the unique~$C_4\subseteq H_j$ is~$x_ixz_iu_5x_i$.
    It remains to show that~$H_j$ contains no~$C_6$. Suppose, looking for
    a contradiction, that
    $\alpha\in [j-1]$ is the smallest index such that
    $H_{\alpha+1}$ has a $6$-cycle~$C'$.
    Note that~$H_{i+1}$
    contains no~$C_6$, so~$\alpha>i$. Since $\type(\alpha)=\A{2}$,
    it follows that $C'$ contains a path~$abcde$ whose edges
    are new in~$H_{\alpha+1}$, so $C'=\text{\textit{abcdefa}}$ for
    some~$f\in V(H_\alpha)$. Moreover, $abcdea$ is a~(new) $5$-cycle
    in~$H_{\alpha+1}$. We conclude that $\text{\textit{aefa}}$ is a
    $3$-cycle in~$H_\alpha$, a contradiction since $C_3\nsubseteq H_j$.
    This proves~\eqref{e:case-3-forb-cycles}.

  \begin{claim}\label{cl:common-edge}
    There exists~$e\in E(H_j)$ with~$e\cap \{x_j,z_j\}\neq \emptyset$ which lies in every $4$-path from
    $x_j$~to~$z_j$ in~$H_j$.
  \end{claim}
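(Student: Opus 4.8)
We want to find a single edge $e$ incident with $x_j$ or $z_j$ that is used by every $4$-path from $x_j$ to $z_j$ in $H_j$. The key structural facts available from \eqref{e:case-3-forb-cycles} are that $H_j$ has no $C_3$, no $C_6$, and exactly one $C_4$, namely $x_ixz_iu_5x_i$. First I would observe that, as noted after Proposition~\ref{prop_config} (cf. Figure~\ref{fig:4-path-union}), the union of two internally disjoint $4$-paths sharing both endvertices contains a cycle of length $3$, $4$, or $6$; since $C_3,C_6\nsubseteq H_j$, any two distinct $4$-paths between $x_j$ and $z_j$ in $H_j$ must overlap in such a way that their union contains the unique $C_4$ of $H_j$. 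Concretely, two $4$-paths $x_j a_2 a_3 z_j$ and $x_j b_2 b_3 z_j$ that are \emph{not} internally disjoint must share an internal vertex; if they shared exactly one, a short case check on where it sits produces a $C_3$ or $C_6$ (ruled out), so they share both internal vertices up to the reversal giving the $C_4$. Hence all $4$-paths from $x_j$ to $z_j$ are "threaded" through the unique $4$-cycle.

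Next I would pin down the candidate edge. The unique $C_4\subseteq H_j$ is $x_ixz_iu_5x_i$; recall $Q_i=x_iu_5z_i$ is the $2$-path of new edges added at step $i$, so $x_iu_5$ and $u_5z_i$ are edges of $H_j$. I would argue that one of the four edges of this $C_4$ that is incident with $x_j$ or with $z_j$ lies on every $4$-path between $x_j$ and $z_j$: a $4$-path must enter and leave the $C_4$, and since it has only four vertices total it cannot avoid using an edge of the $C_4$ at (at least) one of its own ends $x_j, z_j$ if either of those coincides with a vertex of the $C_4$. If neither $x_j$ nor $z_j$ lies on the $C_4$, then the $4$-path $x_j\cdots z_j$ would have all of $x_j,z_j$ plus (since it meets the $C_4$) one or two $C_4$-vertices among its internal vertices, leaving no room — forcing the path to be internally disjoint from every other such path, contradicting uniqueness of the $C_4$ unless there is only one $4$-path at all (in which case any of its edges incident with $x_j,z_j$ works). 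I would carry this out as a short finite case analysis on $|\{x_j,z_j\}\cap V(C_4)|\in\{0,1,2\}$, each case either producing the desired edge directly or yielding a forbidden short cycle.

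The main obstacle will be the bookkeeping in the case $|\{x_j,z_j\}\cap V(C_4)|=1$: here one must verify that a $4$-path leaving the shared vertex along the "wrong" edge would close up into a $C_3$, $C_5$-plus-chord, or $C_6$ in $H_j$, using again that a second $4$-path exists only through the $C_4$. I expect this to go through cleanly because the total length budget ($4$ vertices) is so tight that there are only a handful of combinatorial placements, and each bad placement immediately yields one of the forbidden cycles from \eqref{e:case-3-forb-cycles}. Once the edge $e$ is identified, the claim is proved; it will then be used (in the continuation of Subcase~a) to orient $e$ "away from" $x_j$ or $z_j$ so that every new $5$-cycle created at step $j$ inherits a long block, exactly as in the earlier cases.
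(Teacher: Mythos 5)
Your central structural claim is backwards, and it is the load-bearing step of the entire proposal. You assert that if two $4$-paths $x_ja_2a_3z_j$ and $x_jb_2b_3z_j$ share exactly one internal vertex then a $C_3$ or $C_6$ arises, whereas sharing both internal vertices ``up to the reversal'' gives the $C_4$. In fact the opposite holds. If they share both internal vertices via a reversal ($a_2=b_3$, $a_3=b_2$), then $x_ja_2$, $a_2a_3$ and $a_3x_j=b_2x_j$ are all edges, so $x_ja_2a_3x_j$ is a $C_3$, not a $C_4$. And if they share exactly one internal vertex on the ``same side'' (say $a_2=b_2$, $a_3\neq b_3$), then $a_2a_3z_jb_3a_2$ is a $C_4$ — this is precisely the allowed configuration, not a forbidden one. (Sharing exactly one internal vertex on ``crossed'' sides, e.g.\ $a_2=b_3$, does give a $C_3$.) So the conclusion you draw — that all $4$-paths are threaded through the $C_4$ because they pairwise share both internal vertices — is false, and everything after it, including the case split on $\abs{\{x_j,z_j\}\cap V(C_4)}$, rests on a misreading of Figure~\ref{fig:4-path-union}.

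The paper's argument avoids needing to identify which $C_4$ is the unique one. It observes that by \eqref{e:case-3-forb-cycles} any $4$-path $R\neq Q$ between $x_j$ and $z_j$ shares with $Q=x_jv_2v_3z_j$ exactly one edge, necessarily $x_jv_2$ or $v_3z_j$ (the edge $v_2v_3$ alone is impossible, since that would force a $C_3$). If not every other $4$-path shares the same one of these two edges with $Q$, pick $x_jxv_3z_j$ with $x\neq v_2$ and $x_jv_2yz_j$ with $y\neq v_3$; then either $x=y$ and $x_jxv_2x_j$ is a $C_3$, or $x\neq y$ and $x_jxv_3v_2x_j$, $v_2v_3z_jyv_2$ are two distinct $C_4$s — both impossible by \eqref{e:case-3-forb-cycles}. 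Hence every $4$-path uses a single common edge incident with $x_j$ or $z_j$. You could rescue your plan by correcting the overlap analysis to this version, but the specific-$C_4$ bookkeeping you propose is unnecessary and does not add anything the abstract count of ``exactly one $C_4$'' already provides.
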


  \begin{claimproof}
    By~\eqref{e:case-3-forb-cycles}, each $4$-path between $x_j$~and~$z_j$ in~$H_j$
    other than~$Q$ intersects~$x_jv_2v_3z_j$~(i.e.,~$Q$) in precisely one edge~$h$;
    moreover,~$h\neq v_2v_3$
    (as~$C_3\nsubseteq H_j$, see Figure~\ref{fig:4-path-union}).
    If Claim~\ref{cl:common-edge} is false,
    then there are paths $x_jxv_3z_j$ and~$x_jv_2yz_j$ in~$H_j$
    with $x\neq v_2$~and~$y\neq v_3$.
    But this contradicts~\eqref{e:case-3-forb-cycles},
    because then either
    $H_j$ has a $3$-cycle $x_jxv_2x_j$ (if~$x=y$)
    or~$H_j$ contains two distinct $4$-cycles
    $x_jxv_3v_2x_j$~and~$v_2v_3z_jyv_2$ (if~$x\neq y$).
\end{claimproof}

  We now return to the proof of \emph{Case (a)},
  describing the orientation of~$F$.
  Let $e$ be the edge common to all~$4$-paths between~$x_j$ and~$z_j$
  in~$H_j$ (as per~Claim~\ref{cl:common-edge}).
  Orient~$H_1$ so that it is a directed cycle.
  For every $\alpha\in[t-1]\setminus\{j\}$, orient the new edges to
  form a directed path. Finally,
  orient~$x_jv_5z_j$ so that the path it forms with~$e$ is directed.

  Let us check that every $5$-cycle in~$F$ has a long block.
  Clearly, the two $5$-cycles in $H_2$~have each a long block.
  For each $\alpha\in[t-1]\setminus\{i,j\}$,
  each new $5$-cycle in~$H_{\alpha+1}$
  contains~$Q_\alpha$ and hence has a long block
  ($e(Q_\alpha)\ge 3$ since~$\type(\alpha)=\A{2}$).
  Finally, every new $5$-cycle
  in~$H_{j+1}$ contains the directed path formed by
  $e$~and~$x_jv_5z_j$. We conclude that~$F\narr\Carr$.

  \subcase{b} There exists a $5$-cycle~$C$ in~$H_i$ containing precisely two
  edges of~$P$.

  We may assume that no $5$-cycle in~$H_i$
  contains all edges of~$P$, otherwise we would be done by \emph{Case
    (a)}.
  Note that~$C$ cannot avoid~$u_2u_3$, since~$C_3\nsubseteq H_i$.
  We may therefore assume that $C$
  is a $5$-cycle in~$H_i$ with~$z_iu_3u_2\subseteq C$ and that~$H_1=C$.

  Let $\alpha\in[t-1]$ be such that $u_2x_i$ is new in~$H_{\alpha+1}$,
  and let~$C_\alpha$ be a new $5$-cycle
  in~$H_{\alpha+1}$ containing~$u_2x_i$.
  Note that $\type(\alpha)=\A{2}$,
  so $Q_\alpha=u_2x_ixyv$, where
  $u_2,v\in V(H_{\alpha})$~and $x_i,x,y\notin V(H_\alpha)$. We modify the
  construction sequence of~$F$, to a construction
  sequence of~$F$ where
  the $i$-th step is omitted and the $\alpha$-th step
  is replaced by consecutive steps adding, in this order,
   $u_2x_iu_5z_i$~and~$x_ixyv$.
  In the new sequence, $\type(\alpha)=\type(\alpha+1)=\A{3}$,
  $\type(j)=\A{4}=\Aellminus$ and each other step remains of type~\A{2}.
  By the argument in Case~2,
  $F\narr\Carr$.

  \subcase{c} Every $5$-cycle in~$H_i$ contains at most one edge
  of~$P$.

  This is similar to the preceding case.
  Let~$C=H_1$ be a $5$-cycle containing~$z_iu_3$.
  We first show that
  if~$u_2u_3$ is new in~$H_{\alpha+1}$ and~$u_2x_i$ is
  new~$H_{\beta+1}$, then~$\alpha < \beta < i$.
  Indeed, $\alpha,\beta<i$ by definition, and
  $\alpha\neq \beta$ as otherwise the new cycles in
  $H_{\alpha+1}$ would contain two edges of~$P$.
  Moreover, $\type(\alpha)=\type(\beta)=\A{2}$
  by Proposition~\ref{p:config_bounds},
  so each new
  edge in~$H_{\alpha+1}$ and~$H_{\beta+1}$ must contain at least one new endvertex.
  Hence~$\alpha< \beta$.

  Let $Q_\beta=u_2x_ixyv$, where $u_2,v\in V(H_\beta)$
  and~$x_i,x,y\notin V(H_\beta)$. As in \emph{Case~(b)},
  we define an alternative construction sequence of~$F$,
  where the $i$-th step is omitted and the $\beta$-th step
  is replaced by consecutive steps adding
  $u_2x_iu_5z_i$~and~$x_ixyv$ (in this order).
  By~Case~2,~$F\narr\Carr$.

  \case{4} There exists~$i\in[t-1]$ such that~$\type(i)=\B{j}$,
  where $3\le j\le \ell -1$.

  By~Proposition~\ref{p:config_bounds},
  for each~$\alpha\in[t-1]\setminus \{i\}$ we have~$\type(\alpha)=\A{2}$,
  and thus $e(Q_\alpha)\ge 3$.
  Recall that $y_i\in V(Q_i)\cap H_i$.
  Note that no new cycle in~$H_{i+1}$ avoids
  both~$x_iQ_iy_i$ and~$y_iQ_iz_i$.

  If a new $\ell$-cycle in~$H_{i+1}$ contains $x_iQ_iy_i$ but
  not~$y_iQ_iz_i$, then some construction sequence of~$F$ satisfies
  the hypothesis of one of the previous cases
  (by~replacing the $i$-th step in $(H_1,\ldots,H_t)$
  by consecutive steps adding $x_iQ_iy_i$ and~$y_iQ_iz_i$), and~$F\narr\Carr$.
  We argue similarly if a new $\ell$-cycle in~$H_{i+1}$ avoids $x_iQ_iy_i$.

  If every new $\ell$-cycle in~$H_{i+1}$ contains all of~$Q_i$, then for each
  $\alpha\in[t-1]$ every new cycle in~$H_{\alpha+1}$
  contains~$Q_\alpha$. We fix a directed orientation of~$H_1$
  and orient~$Q_\alpha$ as a directed path for each $\alpha\in[t-1]$.
  Then $H_1$ has a~long block and for each~$\alpha\in[t-1]$
  the new $\ell$-cycles in~$H_{\alpha+1}$ have
  a long block as well (since~$\length(Q_\alpha)\ge 3$). Therefore~$F\narr\Carr$.
\end{proof}

\subsubsection{Cycles of length 4}
\label{s:ell=4}

To conclude this section we consider orientations of $4$-cycles.

\begin{theorem}\label{t:ell=4}
  Let \Carr\ be an orientation of~$C_4$. If~$G$ is a graph and~$m(G)<\mm(C_4)$, then $G\narr \Carr$.
\end{theorem}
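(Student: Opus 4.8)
The plan is to treat the four isomorphism types of orientations of~$C_4$ one at a time. Up to isomorphism, $\Carr$ is (i)~the directed $4$-cycle, (ii)~an orientation with a block of length~$3$, (iii)~the anti-directed orientation, or (iv)~the orientation~$\Carr^{*}$ with two blocks of length~$2$, that is, two internally disjoint directed $2$-paths from a vertex~$s$ to a vertex~$t$. Since $\mm(C_4)=3/2$, Lemma~\ref{l:d-degenerate} gives that $G$ is $2$-degenerate. Case~(i): any acyclic orientation of~$G$ (say, the one induced by an arbitrary linear order on~$V(G)$) has no directed cycle, hence no copy of~$\Carr$. Case~(ii): $\Carr$ has a long block and $\mm(\Carr)=\mm(C_4)=3/2>m(G)$, so Lemma~\ref{l:long-block} finishes it. Case~(iii): $C_4$ is strictly $2$-balanced with $\delta(C_4)=2$ and $\mm(C_4)-\lfloor\mm(C_4)\rfloor=1/2$, so Corollary~\ref{cor:anti-directed} applies. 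The substance of the proof is case~(iv).

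For~$\Carr^{*}$ I would argue by induction on~$v(G)$. If $v(G)\le 3$ there is no $4$-cycle and any orientation works. Otherwise, $2$-degeneracy provides a vertex~$v$ with $\deg_G(v)\le 2$; since $G-v$ is again $2$-degenerate, induction gives it a $\Carr^{*}$-free orientation~$\Garr$, and it suffices to orient the (at most two) edges incident with~$v$ so that no new copy of~$\Carr^{*}$ is created. If $\deg_G(v)\le 1$ then~$v$ lies in no $4$-cycle and we orient arbitrarily, so suppose $v$ has two neighbours $x\neq y$. As the underlying graph of~$\Carr^{*}$ is the $2$-regular graph~$C_4$, every copy of~$\Carr^{*}$ using~$v$ must use both edges~$vx$ and~$vy$, and hence its fourth vertex is a common neighbour~$w\notin\{v,x,y\}$ of~$x$ and~$y$ in~$G-v$, with~$v$ and~$w$ opposite (non-adjacent) on the underlying $4$-cycle.

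The crucial observation is that a $\Carr^{*}$-free orientation can have no pair of vertices possessing both a common out-neighbour and a common in-neighbour: if $x\to z_1$, $y\to z_1$, $z_2\to x$, $z_2\to y$ with $z_1\neq z_2$, then $z_2\to x\to z_1$ and $z_2\to y\to z_1$ form a copy of~$\Carr^{*}$. Applying this to the pair~$\{x,y\}$ in~$\Garr$, either $x,y$ have no common out-neighbour in~$G-v$, or they have no common in-neighbour. In the first case, orient both~$vx,vy$ out of~$v$, making~$v$ a source; in the second case, orient both into~$v$, making~$v$ a sink. Going through the few ways the $4$-cycle on~$\{v,x,w,y\}$ could be oriented as~$\Carr^{*}$ (its source and sink must be an opposite pair, so either $\{v,w\}$ or $\{x,y\}$), one sees that with this choice the only surviving possibility forces~$w$ to be a common out-neighbour (respectively, in-neighbour) of~$\{x,y\}$, which has just been excluded. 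Hence no new copy of~$\Carr^{*}$ arises and the extended orientation is $\Carr^{*}$-free.

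The only genuinely new work is case~(iv), and there everything reduces to the displayed observation about $\Carr^{*}$-free orientations; once that is in hand, the single-vertex extension is routine bookkeeping, so I do not anticipate a real obstacle. (It is worth noting that case~(iv) uses only that~$G$ is $2$-degenerate, and case~(i) uses nothing about~$G$ at all; the hypothesis $m(G)<3/2$ enters only through Lemma~\ref{l:d-degenerate}, Lemma~\ref{l:long-block} and Corollary~\ref{cor:anti-directed}.)
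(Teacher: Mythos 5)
Your proof is correct, and it takes a genuinely different route from the paper in the only nontrivial case. The paper also disposes of the anti-directed orientation via Corollary~\ref{cor:anti-directed} and any orientation with a long block (which subsumes your cases~(i) and~(ii): the directed $4$-cycle already contains a maximal directed $3$-path, so a separate acyclic-orientation argument is unnecessary) via Lemma~\ref{l:long-block}. For the remaining orientation $\Carr^{*}$ with two blocks of length~$2$, the paper works inside the $C_4$-component machinery: it reduces to a single $C_4$-component $(H_1,\dots,H_t)$, invokes Proposition~\ref{p:config_bounds-C4} to show a $\B{3}$-step forces all other steps to be $\A{2}$, observes that a $\B{3}$-free construction sequence yields a bipartite $G$ (which can be oriented toward one colour class, destroying all directed $2$-paths), and handles the one $\B{3}$-step case by further surgery on the construction sequence. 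Your argument instead runs a direct induction on the $2$-degenerate vertex order, with the key local observation that a $\Carr^{*}$-free orientation admits no pair of vertices having \emph{both} a common out-neighbour and a common in-neighbour; this lets you decide whether to make the removed degree-$\le 2$ vertex a source or a sink. The case analysis over the four possible source--sink placements on the $4$-cycle $vxwy$ is complete and rules out every new copy of $\Carr^{*}$. Your approach is more elementary and self-contained, uses only that $G$ is $2$-degenerate (a weaker hypothesis than $m(G)<3/2$), and sidesteps Propositions~\ref{prop_config} and~\ref{p:config_bounds-C4} entirely; the paper's approach, by contrast, reuses the construction-sequence framework that it needs anyway for $\ell\ge 5$, keeping the treatment of all cycle lengths uniform.
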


To prove Theorem~\ref{t:ell=4} we use the following proposition.

\begin{proposition}\label{p:config_bounds-C4}
  Let $G=(H_1,\ldots,H_t)$ be a $C_4$-component such that $m(G)< \mm(C_4)$. If $\type(i)= \B{3}$ for some $i$, then~$\type(j)=\A{2}$ for each $j\in [t-1]\setminus\{i\}$.
\end{proposition}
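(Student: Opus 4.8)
The plan is to prove this by a single edge--vertex count on the whole component $G=H_t$, exploiting that $\mm(C_4)=3/2$: since $m(G)<\mm(C_4)$, we have $e(G)/v(G)\le m(G)<3/2$, i.e.\ $2e(H_t)<3v(H_t)$. So the whole proof will come down to expressing $e(H_t)$ and $v(H_t)$ in terms of the numbers of steps of each type and checking which combinations survive this inequality.

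First I would tabulate, for a construction sequence $(H_1,\dots,H_t)$ of $G$, how a single step changes $e(H_i)-v(H_i)$ and $v(H_i)$. By Proposition~\ref{prop_config} the only possible step types when $\ell=4$ are $\A{2}$, $\A{3}$, $\A{4}$ and $\B{3}$. A step of type $\A{j}$ appends the path $u_ju_{j+1}\cdots u_4u_1$, which contributes $4-j+1$ new edges and $4-j$ new vertices, so it raises $e-v$ by exactly $1$ while adding $4-j\in\{2,1,0\}$ vertices. A step of type $\B{3}$ appends the three new edges $u_2u_3,u_3u_4,u_4u_1$ and the single new vertex $u_4$ (here $u_2u_3\notin E(H_i)$ by Proposition~\ref{prop_config}, and $u_3u_4,u_4u_1$ are absent because $u_4$ is new), so it raises $e-v$ by $2$ while adding one vertex. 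Every claimed ``new'' edge is incident with a new vertex except $u_2u_3$ in the $\B{3}$ case, which the configuration explicitly places outside $E(H_i)$, so these counts are exact.

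Writing $a_2,a_3,a_4$ for the numbers of steps of type $\A{2},\A{3},\A{4}$ and $b$ for the number of steps of type $\B{3}$, and using $e(H_1)=v(H_1)=4$, the previous paragraph gives
\[
 e(H_t)-v(H_t)=a_2+a_3+a_4+2b,\qquad v(H_t)=4+2a_2+a_3+b.
\]
Substituting $e(H_t)=v(H_t)+a_2+a_3+a_4+2b$ into $2e(H_t)<3v(H_t)$ and simplifying yields $a_3+2a_4+3b<4$. From $3b<4$ we get $b\le 1$; and if $b=1$ then $a_3+2a_4<1$, forcing $a_3=a_4=0$. The hypothesis of the proposition, that some step has type $\B{3}$, means $b\ge 1$, hence $b=1$ (that $\B{3}$ step is unique) and every other step is of type $\A{2}$, which is exactly the assertion.

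There is no real obstacle here beyond getting the bookkeeping right --- in particular noticing that the number of new \emph{vertices} in an $\A{j}$ step depends on $j$ while the change in $e-v$ does not, and that a $\B{3}$ step is the unique ``expensive'' step, contributing $2$ to $e-v$ while adding only one vertex. This is the $\ell=4$ counterpart of Proposition~\ref{p:config_bounds}; the reason the argument can be self-contained (rather than invoking the finer analysis of~\cite{BPCOMP20:antiramsey}) is simply that $\mm(C_4)=3/2$ is small enough for this crude global count to be decisive on its own.
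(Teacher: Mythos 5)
Your proof is correct and uses essentially the same idea as the paper: a density count over the construction sequence, tabulating the new edges and vertices contributed by each step type and invoking $e(G)/v(G)\le m(G)<\mm(C_4)=3/2$. The paper's bookkeeping is organised a little differently (it bounds $e_\alpha\ge 3v_\alpha/2$ for steps $\alpha\neq i,j$ and isolates a single step $j$ to conclude $v_j\ge 2$), but your exact tally yielding $a_3+2a_4+3b<4$ is an equivalent route to the same conclusion.
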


\begin{proof}
  For each $j\in[t-1]$, let $v_j$ and~$e_j$ be respectively the number of new vertices and new~edges in~$H_{j+1}$,
  By Proposition~\ref{prop_config} we have~$e_j\ge 3v_j/2$ and~$e_j>v_j$ for each~$j\in[t-1]$.
  Suppose~$\type(i)=\B{3}$ and fix~$j\in[t-1]\setminus\{i\}$. We have
  \[
    \frac{3}{2}
    =    \mm(C_4) > m(G)
    =    \frac{4 + \sum_{\alpha\in[t-1]} e_\alpha}{4 + \sum_{\alpha\in[t-1]} v_\alpha}
    \ge  \frac{4 + 3 + e_j + \sum_{\alpha\in[t-1]\setminus\{i,j\}} 3v_\alpha/2}{4 + 1 + v_j + \sum_{\alpha\in[t-1]\setminus\{i,j\}} v_\alpha},
  \]
  so~$v_j > 2(e_j-v_j) -1$. Hence~$v_j\ge 2$ (because~$v_j<e_j$) and~$\type(j)=\A{2}$.
\end{proof}

\begin{proof}[Proof of Theorem~\ref{t:ell=4}]
  If $\Carr$ is anti-directed or contains a long block, then $G\narr \Carr$ by~Corollary
  \ref{cor:anti-directed}~and Theorem~\ref{l:long-block}, respectively. We may
  therefore assume~\Carr\ has precisely two blocks of length~$2$; we
  may also assume that~$G$ is a $C_4$-component with construction
  sequence~$(H_1,\ldots,H_t)$,
  because distinct $C_4$-components can be independently oriented
  and edges in no $C_4$-component can be arbitrarily oriented.

  If there is no step of type~$B_3$, then~$G$ is bipartite.
  (Indeed, $H_1\simeq C_4$ and
  steps of type~\A{2},\A{3}, or~\A{4} preserve bipartiteness.)
  Fix a proper $2$-colouring of~$G$ and orient every edge towards the same colour class.
  This avoids directed paths with length~$2$, so $G\narr\Carr$.

  On the other hand, if $\type(i)=\B{3}$, then
  every other step is of type~\A{2} by Proposition~\ref{p:config_bounds-C4}.
  Let~$u_1u_2u_3u_4u_1$ be the new cycle in~$H_{i+1}$, where
  $u_1u_2\in H_i$ (and~$u_2u_3,u_3u_4,u_4u_1\notin E(H_i)$, $u_1,u_2,u_3\in V(H_i)$,
  $u_4\notin V(H_i)$). We may
  assume that $H_1$~is a $4$-cycle $u_1u_2abu_1$.

  If every new $4$-cycle in~$H_{i+1}$
  contains~$u_2u_3u_4u_1$, we orient $H_1$ as a directed
  cycle and the new edges in each step as~directed paths.
  Clearly $H_1$ has a long block and, for each~$\alpha\in[t-1]$,
  every new $4$-cycle in~$H_{\alpha +1}$ contains a long block (formed by~$Q_\alpha$),
  so~$G\narr\Carr$.

  Finally, if a $4$-cycle in~$H_i$ contains $u_2u_3$ but avoids
  $u_3u_4u_1$, then we may replace the $i$-th step (of type~\B{3}) by one
  \A{4}-step (adding~$u_2u_3$) and one~\A{3}-step
  (adding~$u_3u_4u_1$). This yields a construction sequence free
  from~\B{3}, which implies (as argued above) that~$G$ is bipartite
  and~$G\narr \Carr$. Similarly, if $H_i$ contains a new
  $4$-cycle which avoids $u_2u_3$, then we may replace the $i$-th step
  by one \A{3}-step (adding $u_3u_4u_1$) and one \A{4}-step
  (adding~$u_2u_3$), and also conclude that~$G\narr\Carr$.
\end{proof}

\section{Proof of the main theorem (Theorem~\ref{t:thresholds})}
\label{s:main-proofs}

Theorem~\ref{thm:bound_ttt} establishes the case~$t=3$
of Theorem~\ref{t:thresholds}.
We may therefore suppose \Harr\ is either
an acyclic orientation of~$H\in\{K_t,C_t\}$,
with~$t \ge 4$,
or that \Harr\ is an anti-directed orientation of
a strictly $2$-balanced graph~$H$ with~$\delta(H)\ge 2$.
In each one of these cases
\Harr\ is $2$-Ramsey-avoidable (by Remark~\ref{r:2-ramsey-avoidable}),
so (by Theorems \ref{thm:cav18_threshold}~and~\ref{t:npss17_block} together with
Lemmas \ref{l:decomp-in-blocks}~and~\ref{l:component-to-blocks})
it suffices to show
that $G\narr\Harr$ whenever $m(G)<\mm(H)$.
Indeed, this follows by Theorem~\ref{t:tourn-dens-lbound} (when~$H$ is complete),
Theorems~\ref{t:ell-ge-5} and~\ref{t:ell=4} (when $H$ is a~cycle) and by
Corollary~\ref{cor:anti-directed} otherwise.\hfill\qedsymbol

\section{Concluding remarks}
\label{s:concl}

We have shown that if~$\Harr$ is an oriented clique or cycle, then the
threshold for~$G(n,p)\arr\Harr$ is~$n^{-1/\mm(\Harr)}$ if and only
if~$\Harr\neq \ttt$. Interestingly, \ttt\ is not the only exception.
For instance, let $\Garr$ be the digraph obtained from an oriented
tree~$\Tarr$ of order~$n^{1/2-\eps}$, for any fixed $\eps>0$, by
identifying with each $v\in V(\Tarr)$ the source of a distinct copy~$\Harr_v$
of~\ttt. It can be shown that~$p_{\Garr} \ll n^{-1/\mm(\Garr)} = n^{-1/\mm(\ttt)}$. In a
forthcoming paper, the authors describe a richer class of digraphs
with this property.

\bibliographystyle{amsplain}

{\footnotesize

  \bibliography{orient-ramsey-thresh.bib}\par

  \bigskip

  \bigskip

  \noindent
  \textsc{Gabriel Ferreira Barros, Yoshiharu Kohayakawa, T\'assio Naia}\newline
  \textsc{Instituto de Matemática e Estatística, Universidade de São Paulo, São Paulo, Brazil}\newline
  {\scriptsize\texttt{\{gbarros, yoshi, tassio\}@ime.usp.br}}

  \bigskip

  \noindent
  \textsc{Bruno Pasqualotto Cavalar}\newline
  \textsc{University of Warwick, United Kingdom}\newline
  {\scriptsize\texttt{bruno.pasqualotto-cavalar@warwick.ac.uk}}

\vspace*{\fill}
\hfill December 2020

}
\end{document}